\newcommand{\rn}{\mathbb{R}^{N}}
\newcommand{\RR}{\mathbb{R}}
\newcommand{\N}{\mathbb{N}}
\newcommand{\R}{\mathcal{R}}
\renewcommand{\lq}{L^q(\Omega)}
\renewcommand{\wp}{W^{1,p}(\Omega)}
\newcommand{\wpc}{W^{1,p}_0(\Omega)}
\newcommand{\rd}{\mathrm{d}}
\newcommand{\ep}{\varepsilon}
\def\div{\mathop{\mbox{\normalfont div}}\nolimits}
\newtheorem{te}{Theorem}[section]
\newtheorem{lem}[te]{Lemma}
\newtheorem{co}[te]{Corollary}
\newtheorem{pr}[te]{Proposition}
\theoremstyle{remark}
\newtheorem{ob}[te]{Remark}
\theoremstyle{definition}
\newtheorem{de}[te]{Definition}
\numberwithin{equation}{section}
\title[Eigenvalue optimization]{An optimization problem for the first weighted eigenvalue problem plus a potential}
\author[L. Del Pezzo and J. Fernandez Bonder]
{Leandro M. Del Pezzo and Juli\'an Fern\'andez Bonder}
\address{Departamento  de Matem\'atica, FCEyN, Universidad de Buenos Aires, \hfill\break\indent Pabell\'on I, Ciudad Universitaria (1428), Buenos Aires, Argentina.}
\email[L. Del Pezzo]{ldpezzo@dm.uba.ar}
\email[J. Fernandez Bonder]{jfbonder@dm.uba.ar}
\urladdr[J. Fernandez Bonder]{http://mate.dm.uba.ar/~jfbonder}
\thanks{Supported by Universidad de Buenos Aires under grant X078 and by ANPCyT PICT2006--290. J. Fern\'andez Bonder is a member of CONICET. Leandro Del Pezzo is a fellow of CONICET}
\keywords{optimization; nonlinear eigenvalues; rearrangements}
\subjclass[2000]{49K20; 35P15; 35J10}
\begin{document}

\begin{abstract}
In this paper, we study the problem of minimizing the first eigenvalue of the $p-$Laplacian plus a potential with weights, when the potential and the weight are allowed to vary in the class of rearrangements of a given fixed potential $V_0$ and weight $g_0$. Our results generalized those obtained in \cite{DPFB1} and \cite{CEP2}.
\end{abstract}

\maketitle


\section{Introduction}

In this paper we consider the following nonlinear eigenvalue problem with weights
\begin{equation}\label{eigen}
\begin{cases}
-\Delta_p u + V(x) |u|^{p-2}u = \lambda g(x) |u|^{p-2}u & \mbox{in }\Omega,\\
u=0 & \mbox{on }\partial\Omega,
\end{cases}
\end{equation}
where $\Omega$ is a smooth bounded open subset of $\rn$. Here $\Delta_p u := \div(|\nabla u|^{p-2}\nabla u)$ is the well-known $p-$Laplace operator, $V$ is a potential function and $g$ is a weight.

Our aim is to study the following optimization problems:
\begin{equation}\label{problema}
I:=\inf\left\{\lambda(g,V)\colon g\in\R(g_0), V\in\R(V_0)\right\}.
\end{equation}
where $V_0$ and $g_0$ are fixed potential and weight functions respectively with some precise hypotheses that we state below (see (H1) and (H2)) and $\R(V_0)$, $\R(g_0)$ are the classes of rearrangements of $V_0$ and $g_0$ respectively.

This type of optimization problems for eigenvalues of the $p-$Laplacian have deserved a great deal of attention. We like to mentioned the work of \cite{AsHa} where the problem was analyzed in the context of the classical Laplacian ($p=2$) without weights ($g\equiv 1$) and the potential was allowed to vary in the unit ball of some $L^q(\Omega)$.

Later on, the results in \cite{AsHa} were extended to the nonlinear case in \cite{DPFB1}, again without weights.

A related minimization problem when the minimization parameter was allowed to vary in the class of rearrangements of a fixed function, was first considered by \cite{CEP}. See also \cite{DPFB2}.

The eigenvalue problem \eqref{eigen} was analyzed exhaustively in \cite{CRQ} where the authors prove the existence of a principal eigenvalue and several properties of it. The results of \cite{CRQ}  closely related to our work are discussed in Section 2.

More recently, in \cite{CEP2}, the authors analyze problem \eqref{problema} but when the potential function is zero. In that work the authors prove the existence of a minimizing weight $g_*$ in the class of rearrangements of a fixed function $g_0$ and, in the spirit of \cite{Bu1} they found a sort of {\em Euler-Lagrange formula} for $g_*$. However, this formula does not appear to be suitable for use in actual computations of these minimizers.

In this work we first extend the results in \cite{CEP2} to \eqref{eigen} and prove the existence of a minimizing weight and potential for \eqref{problema}. Also the same type of Euler-Lagrange formula is proved for both the weight and potential. But, we go further and study the dependence of the eigenvalue $\lambda(g,V)$ with respect to $g$ and $V$ and prove the continuous dependence in $L^q$ norm and, moreover, the differentiability with respect to regular perturbations of the weight and the potential.

In the case when the perturbations are made inside the class of rearrangements, we exhibit a simple formula for the derivative of the eigenvalue with respect to $g$ and $V$.

We believe that this formula can be used in actual computations of the optimal eigenvalue, weight and potential, since this type of formulas have been used in similar problems in the past with significant success, see \cite{FBGR, HENROT, Oudet, Pironneau} and references therein. This is what we think is the main contribution of our paper.

\subsection*{Organization of the paper}
After finishing this introduction, the paper is organized as follows. In Section 2 we collect some preliminaries needed in the paper. First we discuss the results of \cite{C,CRQ} on the eigenvalue problem \eqref{eigen} and second we recall some known results on rearrangements due to \cite{Bu1, Bu2}. In Section 3 we prove the existence of a unique minimizer and give a characterization of it, similar to the one found in \cite{CEP2} for the problem without potential. Finally, in Section 4 we study the dependence of the eigenvalue with respect to the weight and the potential and prove, first the continuous dependence in the $L^q$ topology (Proposition \ref{continuidad}) and finally we show a simple formula for the derivative of the eigenvalue with respect to regular variations of the weight and the potential within the class of rearrangements (Theorem \ref{teo.principal}).


\section{Preliminaries}

\subsection{Properties of the principal eigenvalue}
Let $\Omega$ be a bounded smooth domain in $\rn$ with $N\ge2$ and $1<p<\infty.$ Let $g_0$ and $V_0$ be measurable functions that satisfy the following assumptions:
\begin{eqnarray*}	
\textrm{(H1)}&\qquad& g_0, V_0 \in\lq \textrm{ where }
\begin{cases}
q>\frac{N}{p} &\textrm{if } 1<p\le N,\\
q=1 &\textrm{if } p>N,
\end{cases}\\
\textrm{(H2)}&\qquad& \|V_0^{-}\|_{\lq}< S_{pq'} \textrm{ or } V \ge -S_p + \delta \textrm{ for some } \delta>0 \textrm{ and } g_0^+\not\equiv 0,
\end{eqnarray*}
where $f^{-}=\min\{f,0\}$, $f^+ = \max\{f,0\}$ and  $S_r$ ($r=p,\, pq'$) is the best (largest) constant in the Sobolev--Poincar\'e inequality
$$
S \|u\|_{L^{r}(\Omega)}^p \le \int_\Omega|\nabla u|^p \, \rd x \quad \forall u\in\wpc,
$$
i.e.,
$$
S_r := \inf\left\{\int_\Omega |\nabla u|^p \, \rd
x\colon u\in\wpc, \, \|u\|_{L^{r}(\Omega)}=1\right\}.
$$

Observe that if $g$ and $V$ are measurable functions that
satisfy there exists a unique positive principal eigenvalue $\lambda(g,V)$ of \eqref{eigen} and it is characterized by
\begin{equation}\label{caract}
\lambda(g,V) := \min\left\{\int_\Omega |\nabla u|^p + V(x)|u|^p\, \rd x\colon u\in\wpc, \int_\Omega g(x)|u|^p \, \rd x = 1\right\}.
\end{equation}
See \cite{CRQ}. Obviously, if $u$ is a minimizer, so is $|u|$; therefore we may assume $u\ge 0$.

The following Lemma is taken from \cite{C} and gives us the positivity of eigenfunctions associated to the principal eigenvalue.
\begin{lem}[\cite{C}, Proposition 3.2]\label{positivo}
Let $g$ and $V$ be two measurable functions that satisfy the assumption (H1). If $u\in\wpc$ is a nonnegative weak solution to \eqref{eigen} then either $u\equiv 0$ or $u>0$ for all $x\in \Omega$.
\end{lem}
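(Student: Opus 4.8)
The plan is to regard $u$ as a nonnegative weak supersolution of a $p$-Laplacian inequality with a zero-order potential lying in $\lq$, and then to combine interior regularity, the weak Harnack inequality, and a connectedness argument. Since $u\ge0$ we have $|u|^{p-2}u=u^{p-1}$, so \eqref{eigen} reads
$$
-\Delta_p u = \big(\lambda g(x)-V(x)\big)\,u^{p-1}\qquad\text{weakly in }\Omega.
$$
Setting $b:=(V-\lambda g)^{+}\ge0$, which belongs to $\lq$ by (H1), and using $(\lambda g-V)\,u^{p-1}\ge -b\,u^{p-1}$, I obtain the differential inequality
$$
-\Delta_p u + b(x)\,u^{p-1}\ge 0\qquad\text{weakly in }\Omega,
$$
exhibiting $u$ as a nonnegative weak supersolution of a $p$-Laplacian operator with a nonnegative potential $b\in\lq$.

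First I would secure the regularity needed to speak of pointwise values. By the integrability hypothesis $q>N/p$ (respectively $q=1$ when $p>N$) the right-hand side is subcritical, and the De Giorgi--Nash--Moser and Serrin--Trudinger theory for quasilinear equations (together with the interior estimates of DiBenedetto and Tolksdorf) yields $u\in L^{\infty}_{\mathrm{loc}}(\Omega)\cap C^{0,\alpha}_{\mathrm{loc}}(\Omega)$; in the case $p>N$ this is immediate from the Morrey embedding $\wpc\hookrightarrow C^{0,\alpha}(\overline{\Omega})$. The threshold $q>N/p$ is precisely what makes the zero-order term admissible in the Moser iteration, and this is the step I expect to be the main technical obstacle: one must propagate the unbounded coefficient $b$ carefully through the iteration rather than relying on the classical strong maximum principle of V\'azquez, which assumes a locally bounded potential.

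With continuity in hand, I would invoke the weak Harnack inequality for nonnegative supersolutions: there exist $s>0$ and $C>0$ such that for every ball $B_{2r}=B_{2r}(x_0)\subset\Omega$,
$$
\left(\frac{1}{|B_{2r}|}\int_{B_{2r}} u^{s}\,\rd x\right)^{1/s}\le C\,\inf_{B_{r}} u.
$$
If $u(x_0)=0$ for some interior point $x_0$, then $\inf_{B_{r}(x_0)}u\le u(x_0)=0$, whence the left-hand side vanishes and $u\equiv 0$ on $B_{2r}(x_0)$. Thus the interior zero set $\{x\in\Omega:u(x)=0\}$ is open; since $u$ is continuous, its complement $\{u>0\}$ is open as well.

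Finally I would close the argument by connectedness. As $\Omega$ is a domain, it cannot be written as the union of the two disjoint open sets $\{u=0\}$ and $\{u>0\}$ unless one of them is empty. Hence either $u\equiv 0$ in $\Omega$ or $u>0$ for every $x\in\Omega$, which is the assertion of the Lemma.
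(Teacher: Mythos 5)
Your proposal is correct and takes essentially the same route as the paper: the paper's entire proof is a one-line citation of Harnack's inequality from Serrin's paper, which is precisely the weak Harnack inequality for nonnegative supersolutions with an $L^q(\Omega)$ zero-order term that you invoke, followed by the standard openness-plus-connectedness argument. Your write-up simply makes explicit the steps that the citation compresses (reduction to the supersolution inequality $-\Delta_p u + b\,u^{p-1}\ge 0$ with $b=(V-\lambda g)^+\in L^q(\Omega)$, interior continuity, and the connectedness of $\Omega$).
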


\begin{proof}
The proof is a direct consequence of Harnack's inequality. See \cite{Serrin}.
\end{proof}

We therefore immediately obtain,
\begin{co}
Under the assumptions of the previous Lemma, every eigenfunction associated to the principal positive eigenvalue has constant sign.
\end{co}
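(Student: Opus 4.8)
The plan is to reduce the constant-sign claim to the strong positivity already provided by Lemma \ref{positivo}, by passing to the absolute value of the eigenfunction. Let $u\in\wpc$ be an eigenfunction associated to the principal eigenvalue $\lambda(g,V)$. First I would test the weak formulation of \eqref{eigen} with $u$ itself; this gives
$$
\int_\Omega |\nabla u|^p + V(x)|u|^p\,\rd x = \lambda(g,V)\int_\Omega g(x)|u|^p\,\rd x,
$$
so that, after the normalization $\int_\Omega g|u|^p\,\rd x = 1$, the function $u$ realizes the infimum in \eqref{caract}, i.e.\ it is a minimizer.

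Next I would exploit the observation recorded right after \eqref{caract}: since $|\nabla|u||=|\nabla u|$ a.e.\ and both the potential and the weight terms depend only on $|u|^p$, replacing $u$ by $|u|$ changes neither the functional nor the constraint. Hence $|u|$ is also a minimizer of \eqref{caract}, and therefore, through the Euler--Lagrange equation associated to \eqref{caract}, a nonnegative weak solution of \eqref{eigen} with eigenvalue $\lambda(g,V)$. As $|u|\not\equiv0$, Lemma \ref{positivo} forces $|u|>0$ everywhere in $\Omega$; equivalently, $u$ vanishes nowhere in $\Omega$.

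Finally I would invoke the interior regularity theory for the $p$-Laplacian to conclude that $u$ is continuous in $\Omega$. A continuous function with no zeros on the connected open set $\Omega$ cannot change sign, so $u$ is either everywhere positive or everywhere negative, which is the asserted constant sign.

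The genuinely substantive input here is the strong maximum principle encapsulated in Lemma \ref{positivo}; given that, the remaining steps are routine. The one point deserving care is the passage from $u$ to $|u|$ as a \emph{solution}: this relies on the variational characterization \eqref{caract}, so that $|u|$ inherits the equation through minimality rather than through a direct substitution in \eqref{eigen}, which would be illegitimate since, where $u<0$, one has $|u|^{p-2}|u|\neq|u|^{p-2}u$. One should also confirm that the continuity of $u$ is available from the regularity theory underlying the quoted results.
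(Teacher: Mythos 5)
Your proposal is correct and follows essentially the same route the paper intends: the corollary is deduced from Lemma \ref{positivo} by observing that $|u|$ is again a minimizer of \eqref{caract}, hence a nonnegative eigenfunction, hence strictly positive, so that $u$ never vanishes and, being continuous by the $C^{1,\delta}$ regularity theory, has constant sign on the connected domain $\Omega$. The only detail you pass over silently is that the normalization $\int_\Omega g|u|^p\,\rd x=1$ is legitimate, i.e.\ that $\int_\Omega g|u|^p\,\rd x>0$ for any eigenfunction associated to the positive principal eigenvalue; this follows from the coercivity of $J_V$ (Lemma \ref{coercividad}) together with $\lambda(g,V)>0$, and is standard.
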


Furthermore, following \cite{CRQ}, we have that the principal eigenvalue $\lambda(g,V)$ is simple.

\begin{lem}\label{simple}
Let $g$ and $V$ be two measurable functions that satisfy the assumption (H1). Let $u$ and $v$ be two eigenfunctions associated to $\lambda(g,V)$. Then, there exists a constant $c\in\RR$ such that $u=cv$.
\end{lem}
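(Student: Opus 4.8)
The plan is to exploit the \emph{hidden convexity} of the $p$-Dirichlet integral, adapting the classical argument for the $p$-Laplacian to accommodate the potential $V$ and the weight $g$. First I would reduce to positive eigenfunctions. By the preceding Corollary any eigenfunction associated with $\lambda(g,V)$ has constant sign, so replacing $u$ and $v$ by $|u|$ and $|v|$ if necessary and invoking Lemma \ref{positivo}, I may assume $u>0$ and $v>0$ in $\Omega$. Using the positivity of the relevant integrals for principal eigenfunctions (a known property from \cite{CRQ}), I normalize so that $\int_\Omega g u^p\,\rd x = \int_\Omega g v^p\,\rd x = 1$; testing \eqref{eigen} against the eigenfunction itself then gives, writing $J(w):=\int_\Omega |\nabla w|^p + V|w|^p\,\rd x$, the identities $J(u)=J(v)=\lambda(g,V)$, so that both $u$ and $v$ realize the minimum in \eqref{caract}.

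The central device is the interpolation $\sigma_t := \big((1-t)u^p + t v^p\big)^{1/p}$ for $t\in[0,1]$. Two of the three terms are affine along this path: since $\sigma_t^p = (1-t)u^p + tv^p$ pointwise, I obtain at once $\int_\Omega g\,\sigma_t^p\,\rd x = (1-t)+t = 1$, so $\sigma_t$ is admissible in \eqref{caract}, and likewise $\int_\Omega V\sigma_t^p\,\rd x = (1-t)\int_\Omega V u^p\,\rd x + t\int_\Omega V v^p\,\rd x$. The only nontrivial ingredient is the gradient convexity estimate
\begin{equation*}
\int_\Omega |\nabla \sigma_t|^p\,\rd x \le (1-t)\int_\Omega|\nabla u|^p\,\rd x + t\int_\Omega|\nabla v|^p\,\rd x .
\end{equation*}
Granting this, combining the three facts gives $J(\sigma_t) \le (1-t)\lambda(g,V) + t\lambda(g,V) = \lambda(g,V)$, while admissibility of $\sigma_t$ forces $J(\sigma_t)\ge \lambda(g,V)$. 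Hence equality holds throughout, and in particular the displayed gradient inequality is an \emph{equality}.

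To prove the gradient inequality and extract its equality case I would argue pointwise. Setting $\theta := (1-t)u^p/\sigma_t^p\in[0,1]$ (so $1-\theta = t\,v^p/\sigma_t^p$), differentiation of $\sigma_t^p$ yields $\nabla\sigma_t/\sigma_t = \theta\,\nabla u/u + (1-\theta)\,\nabla v/v$; applying the convexity of $\xi\mapsto|\xi|^p$ on $\rn$ and multiplying through by $\sigma_t^p$ gives exactly $|\nabla\sigma_t|^p \le (1-t)|\nabla u|^p + t|\nabla v|^p$, which integrates to the estimate. The \textbf{main obstacle}, and the source of simplicity, is the equality analysis: since $|\xi|^p$ is \emph{strictly} convex for $p>1$, equality in the pointwise inequality with $t\in(0,1)$ (whence $\theta\in(0,1)$) forces $\nabla u/u = \nabla v/v$ a.e., i.e. $\nabla\log(u/v)=0$ a.e. in $\Omega$. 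Since $\Omega$ is a connected domain, $u/v$ must be constant, which is the assertion $u=cv$. I would still need to justify the pointwise differentiation and the strict-convexity equality case for the a.e.-defined Sobolev gradients, but this is routine once $u,v>0$ are bounded away from zero on compact subsets by the Harnack-type positivity of Lemma \ref{positivo}.
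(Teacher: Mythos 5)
Your proof is correct in substance, but it takes a genuinely different route from the paper: the paper disposes of this lemma with a one-line citation (``follows immediately from Lemma 4 in \cite{CRQ}''), whereas you give an essentially self-contained argument via the hidden convexity of the Dirichlet integral along the path $\sigma_t=\bigl((1-t)u^p+tv^p\bigr)^{1/p}$. This is the classical convexity mechanism for simplicity of the first $p$-Laplacian eigenvalue, and your key observation --- that the constraint term $\int_\Omega g\,\sigma_t^p\,\rd x$ and the potential term $\int_\Omega V\sigma_t^p\,\rd x$ are \emph{affine} in $t$, so that all convexity is concentrated in the gradient term --- is exactly what lets the argument absorb both the weight $g$ and the potential $V$: equality in the variational characterization \eqref{caract} then forces pointwise equality in the strictly convex gradient inequality, whence $\nabla u/u=\nabla v/v$ a.e.\ and $u=cv$ on the connected set $\Omega$. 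What the citation buys the paper is brevity and the inheritance of all delicate preliminaries from \cite{CRQ}; what your argument buys is transparency and a proof readable inside the paper itself. Two points in your sketch still lean on outside facts and should be stated as such. First, the normalization step requires $\int_\Omega g\,u^p\,\rd x>0$ for an \emph{arbitrary} eigenfunction associated to $\lambda(g,V)$; under (H1) alone this is not automatic (testing \eqref{eigen} with $u$ gives only $\int_\Omega|\nabla u|^p+V|u|^p\,\rd x=\lambda(g,V)\int_\Omega g\,|u|^p\,\rd x$, which by itself does not exclude $\int_\Omega g\,|u|^p\,\rd x\le 0$), so you are, as you acknowledge, importing this from \cite{CRQ} --- your proof is therefore not fully independent of that reference. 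Second, you should record why $\sigma_t\in\wpc$: the chain rule a.e.\ plus the bound $|\nabla\sigma_t|^p\le(1-t)|\nabla u|^p+t|\nabla v|^p$ puts $\sigma_t$ in $\wp$, and the pointwise comparison $0\le\sigma_t\le u+v$ with $u+v\in\wpc$ then gives $\sigma_t\in\wpc$ by the standard comparison lemma; together with the local lower bounds from Lemma \ref{positivo} this makes the ``routine'' details you flagged genuinely routine. With those two points made explicit, your proof is complete and is a sound alternative to the paper's citation.
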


\begin{proof}
The proof follows immediately from Lemma 4 in \cite{CRQ}.
\end{proof}

\subsection{Results on Rearrangements}

We will now give some well-known results concerning the
rearrangements of functions. They can be found, for instance, in \cite{Bu1, Bu2}.

\begin{de} 
Given two functions $f,g:\Omega\to \RR$ measurable we say that $f$ is a rearrangement of $g$ if
$$
|\{x\in\Omega\colon f(x)\ge\alpha\}| = |\{x\in\Omega\colon g(x)\ge\alpha\}| \quad \forall \alpha \in \RR,
$$
where $|\cdot|$ denotes the Lebesgue measure.
\end{de}

Now, given $f_0\in L^p(\Omega)$ the set of all rearrangements of $f_0$ is denoted by $\R(f_0)$ and $\overline{\R(f_0)}$ denotes the clousure of $\R(f_0)$ in $L^p(\Omega)$ with respect to the weak topology.

\begin{te}\label{desrearr}
Let $1 \le p < \infty$ and let $p'$ be the conjugate exponent of $p$. Let $f_0 \in L^p(\Omega)$, $f_0\not\equiv 0$ and let $g\in L^{p'}(\Omega)$. Then, there exists $f_*, f^*\in\R(f_0)$ such that
$$
\int_\Omega f_*g \, \rd x \le \int_\Omega fg \, \rd x \le
\int_\Omega f^*g \, \rd x \quad \forall f \in \overline{\R(f)}.
$$
\end{te}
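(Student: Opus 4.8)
The plan is to reduce the statement to the behaviour of a linear functional on a weakly compact convex set, exploiting the structural properties of the weak closure $\overline{\R(f_0)}$ supplied by Burton's theory. First I would recall from \cite{Bu1,Bu2} the two facts on which everything rests: the set $\overline{\R(f_0)}$ is convex and weakly compact in $L^p(\Omega)$, and its set of extreme points coincides exactly with $\R(f_0)$. Weak compactness is transparent for $1<p<\infty$ by reflexivity together with the uniform $L^p(\Omega)$ bound shared by all rearrangements of $f_0$; in the borderline case $p=1$ it follows instead from the observation that every member of $\R(f_0)$ has the same distribution function as $f_0$, so the family is uniformly integrable and the Dunford--Pettis theorem applies.

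Next I would observe that the functional $\Phi(f):=\int_\Omega f g\,\rd x$ is, by the very definition of the weak topology on $L^p(\Omega)$ and the hypothesis $g\in L^{p'}(\Omega)$, linear and weakly continuous. Being weakly continuous on the weakly compact set $\overline{\R(f_0)}$, it attains there both a maximum and a minimum; denote by $f^*$ a maximizer and by $f_*$ a minimizer, so that $\int_\Omega f_* g\,\rd x \le \int_\Omega f g\,\rd x \le \int_\Omega f^* g\,\rd x$ for every $f\in\overline{\R(f_0)}$.

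The only point that requires genuine work is to upgrade $f^*$ and $f_*$ from elements of the closure to actual rearrangements of $f_0$. Here I would invoke Bauer's maximum principle, a consequence of the Krein--Milman theorem: an affine, weakly continuous functional on a compact convex set attains its maximum at an extreme point. Applying this to $\Phi$ and to $-\Phi$ shows that $f^*$ and $f_*$ may each be chosen among the extreme points of $\overline{\R(f_0)}$, which are precisely the elements of $\R(f_0)$. This delivers $f_*, f^*\in\R(f_0)$ with the desired chain of inequalities.

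I expect the extreme-point identification to be the real obstacle, since it is exactly the deep part of Burton's analysis rather than soft functional analysis. An alternative, more hands-on route would bypass it by using the Hardy--Littlewood inequality $\int_\Omega f g\,\rd x \le \int_0^{|\Omega|} f^\sharp(s)\, g^\sharp(s)\,\rd s$ (with $f^\sharp$ and $g^\sharp$ the decreasing rearrangements) to pin down the sharp value of $\Phi$, and then constructing an explicit maximizer of the form $f^*=\phi\circ g$ for a suitable nondecreasing $\phi$, rearranging $f_0$ measurably on the level sets where $g$ is constant; the minimizer is obtained symmetrically with $\phi$ nonincreasing. I would nevertheless lead with the functional-analytic argument, because it handles the maximum and the minimum uniformly and keeps the technical overhead of the level-set construction to a minimum.
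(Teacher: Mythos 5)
Your proposal is correct, but it takes a genuinely different route from the paper, whose entire proof is a citation: the authors deduce the statement directly from Theorem 4 of Burton \cite{Bu1}. What you do instead is reconstruct a proof of that theorem from Burton's underlying structural results, namely that $\overline{\R(f_0)}$ is convex and weakly compact in $L^p(\Omega)$ and that its extreme points are precisely the elements of $\R(f_0)$, combined with Bauer's maximum principle (equivalently, Krein--Milman applied to the face of maximizers). Each step you give is sound: weak compactness via reflexivity for $1<p<\infty$, and via equimeasurability, uniform integrability, Dunford--Pettis and Eberlein--\v{S}mulian for $p=1$; weak continuity of $\Phi(f)=\int_\Omega fg\,\rd x$ because $g\in L^{p'}(\Omega)$; and Bauer's principle applied to $\Phi$ and $-\Phi$ to place a maximizer and a minimizer among the extreme points, hence in $\R(f_0)$. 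The trade-off is worth naming explicitly: your argument treats the maximum and minimum symmetrically and makes visible exactly which compactness and convexity facts are needed, but it is not more elementary than the paper's citation, because the extreme-point identification you lean on (due to Ryff and Burton) is itself one of the deepest results of that theory --- at least as deep as the maximization theorem the paper quotes --- so you have in effect traded one citation for another of comparable weight. Your fallback route via the Hardy--Littlewood inequality and an explicit maximizer of the form $f^*=\phi\circ g$ on the level sets of $g$ is closer in spirit to Burton's own constructive treatment, and is also the mechanism behind Theorem \ref{unicrearr}. One last small point: in the statement the quantifier should read $f\in\overline{\R(f_0)}$ (the ``$\overline{\R(f)}$'' there is a typo), and your proof does establish the inequalities over the full weak closure, as required.
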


\begin{proof}
The proof follows from Theorem 4 in \cite{Bu1}.
\end{proof}

\begin{te}\label{unicrearr}
Let $1\le p \le\infty$ and let $p'$ be the conjugate of $p$. Let $f_0 \in L^p(\Omega),$ $f_0\not\equiv 0$ and let $g\in L^{p'}(\Omega)$.

If the linear functional $L(f)=\int_\Omega fg \, \rd x$ has a unique maximizer $f^*$ relative to $\R(f_0)$ then there exists an increasing function $\phi$ such that $f^*=\phi\circ g$ a.e. in $\Omega$.

Furthermore, if the linear functional $L(f)$ has a unique minimizer $f_*$ relative to $\R(f_0)$ then there exists a
decreasing function $\psi$ such that $f_*=\psi\circ g$ a.e. in $\Omega$.
\end{te}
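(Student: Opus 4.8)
The plan is to show that a maximizer of the linear functional $L(f)=\int_\Omega fg\,\rd x$ over $\R(f_0)$ must be \emph{comonotone} with $g$, and then to use the uniqueness hypothesis to promote this comonotonicity to an honest functional dependence $f^*=\phi\circ g$ with $\phi$ nondecreasing. I would treat only the maximizer: the assertion for the minimizer $f_*$ follows at once by applying the maximizer statement to $-g$, since a nondecreasing function of $-g$ is a nonincreasing function of $g$, giving the decreasing $\psi$.

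First I would prove that \emph{any} maximizer $f^*$ satisfies $(f^*(x)-f^*(y))(g(x)-g(y))\ge0$ for a.e. $(x,y)\in\Omega\times\Omega$; note this uses maximality but not uniqueness. If this failed, the set on which $f^*(x)>f^*(y)$ and $g(x)<g(y)$ would have positive product measure, so I could pick rationals $a>b$, $c<d$ and sets $A\subset\{f^*\ge a,\ g\le c\}$, $B\subset\{f^*\le b,\ g\ge d\}$ of equal positive measure. Choosing a measure preserving bijection $\sigma\colon A\to B$ and letting $\tilde f$ be obtained from $f^*$ by interchanging its values along $\sigma$ (and left unchanged off $A\cup B$) keeps $\tilde f\in\R(f_0)$, because composition with a measure preserving map preserves the distribution. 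After the change of variables $x\mapsto\sigma(x)$ in the part of the integral over $B$, one finds
\begin{equation*}
L(\tilde f)-L(f^*)=\int_A\big(f^*(\sigma x)-f^*(x)\big)\big(g(x)-g(\sigma x)\big)\,\rd x\ge (a-b)(d-c)\,|A|>0,
\end{equation*}
since on $A$ both factors are negative. This contradicts the maximality of $f^*$.

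Next I would use uniqueness to control the level sets of $g$. If $c\in\RR$ is such that $A:=\{g=c\}$ has positive measure and $f^*$ were not a.e. constant on $A$, the same swap performed entirely inside $A$ produces $\tilde f\in\R(f_0)$ with $\tilde f\ne f^*$ on a positive measure set; but $g\equiv c$ on $A$ forces $\int_A\tilde f\,\rd x=\int_A f^*\,\rd x$ and hence $L(\tilde f)=L(f^*)$, so $\tilde f$ is a second maximizer, contradicting uniqueness. Thus $f^*$ is a.e. constant on each level set of $g$ of positive measure, and this is the only place where the uniqueness hypothesis is needed. Combining this with the previous step, $f^*$ is comonotone with $g$ and constant on the level sets of positive measure, and a standard measure theoretic lemma then delivers a nondecreasing $\phi$ with $f^*=\phi\circ g$ a.e.; concretely one can take $\phi(t):=\operatorname{ess\,sup}\{f^*(x)\colon g(x)<t\}$ and verify, using comonotonicity together with the constancy just established, that $\phi(g(x))=f^*(x)$ for a.e. $x$.

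The algebra of the swap and the construction of $\phi$ are routine; the delicate points are really two. The hard part will be (i) extracting the equal-measure sets $A,B$ and the measure preserving bijection $\sigma$ that realizes the swap while keeping the function inside $\R(f_0)$, and (ii) checking that comonotonicity together with constancy on the positive-measure level sets genuinely yields the pointwise representation $f^*=\phi\circ g$, the subtlety being the behaviour of $f^*$ across the measure zero level sets of $g$, where comonotonicity gives no direct information. Both facts are classical and contained in Burton \cite{Bu1, Bu2}, so in the paper I would either reproduce these arguments or simply cite those references, exactly as was done for Theorem \ref{desrearr}.
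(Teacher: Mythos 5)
Your proposal takes a genuinely different route from the paper: the paper does not prove this theorem at all, but simply invokes Theorem 5 of Burton \cite{Bu1}, whereas you reconstruct a proof from first principles. Your architecture is the right one, and it matches the classical argument: a two-point swap along a measure-preserving bijection (which exists between any two Lebesgue sets of equal finite positive measure) shows that \emph{any} maximizer is comonotone with $g$, and the uniqueness hypothesis enters only to force $f^*$ to be a.e.\ constant on each positive-measure level set of $g$ --- a division of labor that is exactly correct, since without uniqueness the conclusion fails (take $g$ constant, $f_0$ nonconstant). The reduction of the minimizer case to the maximizer case via $-g$ is also fine. What the citation buys the paper is brevity; what your argument buys is self-containedness and a transparent accounting of where uniqueness is actually used, which is instructive given how the theorem is applied in Section 3. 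One concrete slip, however: your explicit formula $\phi(t)=\mathop{\mathrm{ess\,sup}}\{f^*(x)\colon g(x)<t\}$ with \emph{strict} inequality does not satisfy $\phi\circ g=f^*$ a.e. Indeed, if $g$ takes only the values $0$ and $1$, each on a set of positive measure, then $\phi(0)=\mathop{\mathrm{ess\,sup}}\emptyset=-\infty$, while $\phi(1)$ equals the (constant) value of $f^*$ on $\{g=0\}$, so $\phi\circ g$ disagrees with $f^*$ a.e.\ on \emph{both} level sets. You must define $\phi$ with the non-strict inequality $g(x)\le t$: then comonotonicity gives $\mathop{\mathrm{ess\,sup}}\{f^*(y)\colon g(y)<g(x)\}\le f^*(x)$ for a.e.\ $x$, the constancy established in your second step settles the positive-measure level sets, and an exhaustion argument over rational thresholds settles the points lying in null level sets of $g$. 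Finally, note that ``increasing'' in the statement is Burton's terminology for ``nondecreasing,'' which is what your construction (so corrected) delivers, so there is no discrepancy there.
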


\begin{proof}
The proof follows from Theorem 5 in \cite{Bu1}.
\end{proof}


\section{Minimization and Characterization}

Let $\Omega$ be a bounded smooth domain in $\rn$ with $N\ge2$ and $1<p<\infty.$ Given $g_0$ and $V_0$ measurable functions that satisfy the assumptions (H1) and (H2) our aim in this section is to analyze the following problem
$$
I = \inf\left\{\lambda(g,V)\colon g \in \R(g_0),\, V \in \R(V_0) \right\},
$$
where $\R(g_0)$ (resp. $\R(V_0)$) is the set of all
rearrangements of $g_0$ (resp. $V_0$) and $\lambda(g,V)$ is the first positive principal eigenvalue of problem \eqref{eigen}.

\begin{ob}
Observe that if $g\in\R(g_0)$ and \mbox{$V\in\R(V_0)$} then $g$ and $V$ satisfy (H1) and (H2).
\end{ob}

We first need a lemma to show that, under hypotheses (H1) and (H2), the functionals
$$
J_V(u) := \int_\Omega |\nabla u|^p\, \rd x + \int_\Omega V(x) |u|^p\, \rd x
$$
are uniformly coercive for $V\in\R(V_0)$.
\begin{lem}\label{coercividad}
Let $V_0$ satisfies (H1) and (H2). Then, there exists $\delta_0>0$ such that
$$
J_V(u) \ge \delta_0 \int_\Omega |\nabla u|^p\, \rd x,\quad \mbox{for every } V\in\R(V_0).
$$
\end{lem}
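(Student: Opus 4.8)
The plan is to exploit the fact that every $V\in\R(V_0)$ is equimeasurable with $V_0$, so that the only features of $V$ entering the estimate are invariant along the class of rearrangements. Concretely, since $|\{x\colon V(x)\ge\alpha\}|=|\{x\colon V_0(x)\ge\alpha\}|$ for all $\alpha\in\RR$, one has $\int_\Omega F(V)\,\rd x=\int_\Omega F(V_0)\,\rd x$ for any measurable $F$; in particular $\|V^-\|_{\lq}=\|V_0^-\|_{\lq}$, and if $V_0\ge -S_p+\delta$ a.e.\ then $V\ge -S_p+\delta$ a.e.\ as well. This equimeasurability is precisely what upgrades the pointwise estimate to a \emph{uniform} one over $\R(V_0)$.

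I would then split according to the two alternatives in (H2). First assume $\|V_0^-\|_{\lq}<S_{pq'}$. Writing $\int_\Omega V|u|^p\,\rd x\ge\int_\Omega V^-|u|^p\,\rd x=-\int_\Omega |V^-|\,|u|^p\,\rd x$, I apply H\"older with exponents $q,q'$ to get $\int_\Omega|V^-|\,|u|^p\,\rd x\le\|V^-\|_{\lq}\,\|u\|_{L^{pq'}(\Omega)}^p$, and then the Sobolev--Poincar\'e inequality with $r=pq'$ to bound $\|u\|_{L^{pq'}(\Omega)}^p\le S_{pq'}^{-1}\int_\Omega|\nabla u|^p\,\rd x$. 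Combining these and using $\|V^-\|_{\lq}=\|V_0^-\|_{\lq}$ gives $J_V(u)\ge\bigl(1-\|V_0^-\|_{\lq}/S_{pq'}\bigr)\int_\Omega|\nabla u|^p\,\rd x$, so $\delta_0:=1-\|V_0^-\|_{\lq}/S_{pq'}>0$ works for every $V\in\R(V_0)$.

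For the second alternative I use the pointwise bound $V\ge -S_p+\delta$ directly: $\int_\Omega V|u|^p\,\rd x\ge(-S_p+\delta)\int_\Omega|u|^p\,\rd x$. When $-S_p+\delta<0$ I invoke the Sobolev--Poincar\'e inequality with $r=p$, namely $\int_\Omega|u|^p\,\rd x\le S_p^{-1}\int_\Omega|\nabla u|^p\,\rd x$, to obtain $J_V(u)\ge(\delta/S_p)\int_\Omega|\nabla u|^p\,\rd x$; when $-S_p+\delta\ge0$ the potential term is nonnegative and $J_V(u)\ge\int_\Omega|\nabla u|^p\,\rd x$. Taking $\delta_0:=\min\{1,\delta/S_p\}$ covers both cases, once more uniformly in $V\in\R(V_0)$.

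The estimates themselves (H\"older together with Sobolev--Poincar\'e) are entirely routine; the only point requiring care, and the conceptual heart of the statement, is the uniformity, i.e.\ verifying that $\|V^-\|_{\lq}$ (resp.\ the essential lower bound $-S_p+\delta$) is common to all members of $\R(V_0)$. Since this is exactly what equimeasurability delivers, I expect no genuine obstacle beyond this bookkeeping.
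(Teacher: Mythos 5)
Your proof is correct and follows essentially the same route as the paper's: H\"older's inequality with exponents $q,q'$ plus the Sobolev--Poincar\'e inequality with $r=pq'$, together with the equimeasurability fact that $\|V^-\|_{\lq}=\|V_0^-\|_{\lq}$ for every $V\in\R(V_0)$, yielding the uniform constant $\delta_0=1-\|V_0^-\|_{\lq}/S_{pq'}$. The only difference is that you also write out the second alternative of (H2) (the pointwise bound $V\ge -S_p+\delta$, handled via the $r=p$ Poincar\'e inequality), a case the paper declares ``easier and left to the reader.''
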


\begin{proof}
We prove the lemma assuming that $\|V_0^-\|_{L^q(\Omega)} < S_{pq'}$. Also, we assume that $1<p\le N$. The other cases are easier and are left to the reader.

First, observe that
$$
J_V(u) \ge \int_\Omega |\nabla u|^p\, \rd x + \int_\Omega V^-(x) |u|^p\, \rd x.
$$
On the other hand, $q>N/p$ implies that $pq'<p^*$. So
$$
\int_\Omega |V^-(x)| |u|^p\, \rd x\le \|V^-\|_{\lq} \|u\|_{L^{pq'}(\Omega)}^p = \|V_0^-\|_{\lq} \|u\|_{L^{pq'}(\Omega)}^p.
$$
Then, by (H2), there exists $\delta_0$ such that
$$
\|V_0^-\|_{\lq} \le (1-\delta_0) S_{pq'}.
$$
Therefore
$$
J_V(u)\ge \delta_0 \int_\Omega |\nabla u|^p\, \rd x,
$$
as we wanted to prove.
\end{proof}

\begin{ob}
We remark that what is actually needed is the uniform coercitivity of the functionals $J_V$ for $V\in \R(V_0)$. Hypotheses (H1) and (H2) are a simple set of hypotheses that guaranty that.
\end{ob}

We now prove that the infimum is achieved.
\begin{te}\label{existencia} 
Let $g_0$ and $V_0$ be measurable functions that satisfy the assumptions (H1) and (H2) and let $\R(g_0)$ and $\R(V_0)$ be the sets of all rearrangements of $g_0$ and 
$V_0$ respectively. Then there exists $g^*\in\R(g_0)$ and $V_*\in\R(V_0)$  such that
$$
I = \lambda(g^*,V_*).
$$
\end{te}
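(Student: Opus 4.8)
The plan is to apply the direct method of the calculus of variations, but in two stages, because the weak topology of $\lq$ only yields a minimizing pair in the weak closures $\overline{\R(g_0)}$ and $\overline{\R(V_0)}$; a second argument, based on Theorem \ref{desrearr}, is then needed to replace it by a genuine element of $\R(g_0)\times\R(V_0)$ without increasing the eigenvalue. I expect this second stage to be the crux.

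First I would pick a minimizing sequence $(g_n,V_n)\in\R(g_0)\times\R(V_0)$ with $\lambda(g_n,V_n)\to I$, and let $u_n\in\wpc$, $u_n\ge 0$, be the associated normalized eigenfunctions, so that $\int_\Omega g_n|u_n|^p\,\rd x=1$ and $J_{V_n}(u_n)=\lambda(g_n,V_n)$ by \eqref{caract}. By Lemma \ref{coercividad} one has $\delta_0\int_\Omega|\nabla u_n|^p\,\rd x\le J_{V_n}(u_n)\to I$, so $(u_n)$ is bounded in $\wpc$. Since every rearrangement of $g_0$ (resp. $V_0$) shares the same $\lq$ norm, $(g_n)$ and $(V_n)$ are bounded in $\lq$. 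Passing to subsequences I obtain $u_n\rightharpoonup u$ weakly in $\wpc$ and, by the compact embedding $\wpc\hookrightarrow L^{pq'}(\Omega)$ (valid under (H1), since then $pq'<p^*$), strongly in $L^{pq'}(\Omega)$; and $g_n\rightharpoonup\bar g\in\overline{\R(g_0)}$, $V_n\rightharpoonup\bar V\in\overline{\R(V_0)}$ weakly in $\lq$.

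The passage to the limit rests on combining weak convergence of $g_n,V_n$ in $\lq$ with strong convergence of $|u_n|^p$ in $L^{q'}(\Omega)$ (a consequence of $u_n\to u$ in $L^{pq'}(\Omega)$). This gives $\int_\Omega\bar g|u|^p\,\rd x=\lim\int_\Omega g_n|u_n|^p\,\rd x=1$, so in particular $u\not\equiv 0$, and $\int_\Omega V_n|u_n|^p\,\rd x\to\int_\Omega\bar V|u|^p\,\rd x$. Together with the weak lower semicontinuity of $u\mapsto\int_\Omega|\nabla u|^p\,\rd x$, these yield $J_{\bar V}(u)\le\liminf J_{V_n}(u_n)=I$.

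It remains to return from the closures to the classes, which is the delicate point. Since $|u|^p\in L^{q'}(\Omega)$, Theorem \ref{desrearr} (applied with $f_0=g_0$ and then $f_0=V_0$, against the fixed density $|u|^p$) furnishes $g^*\in\R(g_0)$ maximizing $f\mapsto\int_\Omega f|u|^p\,\rd x$ over $\overline{\R(g_0)}$ and $V_*\in\R(V_0)$ minimizing the same functional over $\overline{\R(V_0)}$. Consequently $\int_\Omega g^*|u|^p\,\rd x\ge\int_\Omega\bar g|u|^p\,\rd x=1$ and $\int_\Omega V_*|u|^p\,\rd x\le\int_\Omega\bar V|u|^p\,\rd x$, whence $J_{V_*}(u)\le J_{\bar V}(u)\le I$. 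Using $u$ as a test function in the homogeneous Rayleigh quotient equivalent to \eqref{caract}, and noting that $J_{V_*}(u)\ge 0$ by Lemma \ref{coercividad} while $\int_\Omega g^*|u|^p\,\rd x\ge 1$, I conclude $\lambda(g^*,V_*)\le J_{V_*}(u)\le I$. Since $(g^*,V_*)\in\R(g_0)\times\R(V_0)$ forces the reverse inequality $\lambda(g^*,V_*)\ge I$, equality holds and $(g^*,V_*)$ is the desired minimizer. The essential idea in this last comparison is that maximizing the weight and minimizing the potential against the limiting density $|u|^p$ can only decrease the eigenvalue, which is exactly what lets us trade the weak limits $(\bar g,\bar V)$ for honest rearrangements.
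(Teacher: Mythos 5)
Your proposal is correct and follows essentially the same route as the paper's proof: a minimizing sequence controlled by Lemma \ref{coercividad}, weak/strong compactness to pass to limits $(\bar g,\bar V, u)$, Theorem \ref{desrearr} to replace the weak limits by genuine rearrangements $(g^*,V_*)$, and the normalization of $u$ against $g^*$ to conclude $\lambda(g^*,V_*)\le I$. The only difference is cosmetic: you phrase the final step via the Rayleigh quotient and invoke coercivity to justify $J_{V_*}(u)\ge 0$, where the paper rescales $v=\alpha^{-1/p}|u|$ and leaves that nonnegativity implicit.
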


\begin{proof}
Let $\{(g_n, V_n)\}_{n\in\N}$ be a minimizing sequence, i.e.,
$$
g_n\in\R(g_0) \textrm{ and } V_n\in\R(V_0) \quad \forall n\in\N
$$
and
$$
I = \lim_{n\to\infty}\lambda(g_n,V_n).
$$
Let $u_n$ be the positive eigenfunction corresponding to 
$\lambda(g_n,V_n)$ then
\begin{equation}\label{uno}
\int_\Omega g_n(x) u_n^p = 1 \quad \forall n\in\N,
\end{equation}
and
$$
\lambda(g_n,V_n)=\int_\Omega |\nabla u_n|^p + V_n(x)u_n^p \, \rd x \quad \forall n \in \N.
$$
Hence
\begin{equation}\label{infimo}
I = \lim_{n\to\infty}\int_\Omega |\nabla u_n|^p + V_n(x) u_n^p \, \rd x.
\end{equation}
Thus, by Lemma \ref{coercividad}, $\{u_n\}_{n\in\N}$ is bounded in $\wpc$ and therefore there exists $u\in\wpc$ and some subsequence of $\{u_n\}_{n\in\N}$ (still denoted by $\{u_n\}_{n\in\N}$) such that
\begin{eqnarray}\label{weakly}
u_n&\rightharpoonup&u \quad \textrm{weakly in } \wp,\\
\label{strongly}
u_n&\to&u \quad \textrm{strongly in } L^{pq'}(\Omega).
\end{eqnarray}
Recall that our assumptions on $q$ imply that $pq'<p^*$.

On the other hand, $g_n\in\R(g_0)$ and $V_n\in\R(V_0)$ for all $n\in\N$ then
$$
\|g_n\|_{\lq} = \|g_0\|_{\lq} \textrm{ and } \|V_n\|_{\lq} = \|V_0\|_{\lq} \quad \forall n\in\N.
$$
Therefore there exists $f$, $W \in \lq$ and subsequences of $\{g_n\}_{n\in\N}$ and $\{V_n\}_{n\in\N}$ (still call by $\{g_n\}_{n\in\N}$ and $\{V_n\}_{n\in\N}$) such that
\begin{eqnarray}
\label{gn}
g_n &\rightharpoonup&f \quad \textrm{weakly in }\lq,\\
\label{vn}
V_n &\rightharpoonup&W \quad \textrm{weakly in }\lq.
\end{eqnarray}
Thus, by \eqref{infimo}, \eqref{weakly}, \eqref{strongly} and \eqref{vn}, we have that
$$
I \ge \int_\Omega|\nabla u|^p + W(x)|u|^p \, \rd x
$$
and by \eqref{uno}, \eqref{strongly} and \eqref{gn} we get
$$
\int_\Omega f(x)|u|^p \, \rd x = 1.
$$

Now, since $f\in\overline{\R(g_0)}$ and $W \in \overline{\R(V_0)}$, by Theorem \ref{desrearr}, there
exists $g^* \in \R(g_0)$ and $V_*\in\R(V_0)$ such that
$$
\alpha = \int_\Omega g^*(x)|u|^p\, \rd x \ge \int_\Omega f(x)u^p\, \rd x = 1
$$
and
$$
\int_\Omega V_*(x)u^p\, \rd x \le\int_\Omega W(x)|u|^p\, \rd x.
$$
Let $v = \alpha^{-1/p}|u|$, then
$$
\int_\Omega g^*(x)v^p \rd x = 1
$$
and
$$
\int_\Omega |\nabla v|^p + V_*(x)v^p \, \rd x
= \frac{1}{\alpha} \int_\Omega |\nabla u|^p + V_*(x)|u|^p
\, \rd x \le \frac{1}{\alpha}\int_\Omega |\nabla u|^p + W(x)|u|^p\, \rd x.
$$
Consequently
$$
\lambda(g^*,V_*) \le I,
$$
then
$$
I = \lambda(g^*,V_*).
$$
The proof is now complete.
\end{proof}

Now we give a characterization of $g^*$ and $V_*.$
\begin{te} Let $g_0$ and $V_0$ be measurable functions
that satisfy the assumptions (H1) and (H2). Let $g^*\in\R(g_0)$ and $V_*\in\R(V_0)$ be such that $\lambda(g^*,V_*) = I$ are the ones given by Theorem \ref{existencia}. Then there exist an increasing function $\phi$ and a decreasing function $\psi$ such that
\begin{align*}
g^*=\phi(u_*) &\quad \mbox{a.e. in } \Omega,\\
V_*=\psi(u_*) &\quad \mbox{a.e. in } \Omega,
\end{align*}
where $u_*$ is the positive eigenfunction associated to
$\lambda(g^*,V_*)$.
\end{te}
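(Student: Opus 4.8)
The plan is to recognize $g^*$ and $V_*$ as the extremizers of two \emph{linear} functionals with the common weight $w:=u_*^p$, and then to feed this into Theorem \ref{unicrearr}. Precisely, I would show that $g^*$ is the unique maximizer of $g\mapsto \int_\Omega g\,u_*^p\,\rd x$ relative to $\R(g_0)$ and that $V_*$ is the unique minimizer of $V\mapsto \int_\Omega V\,u_*^p\,\rd x$ relative to $\R(V_0)$. Granting uniqueness, Theorem \ref{unicrearr} yields an increasing $\tilde\phi$ and a decreasing $\tilde\psi$ with $g^*=\tilde\phi(u_*^p)$ and $V_*=\tilde\psi(u_*^p)$ a.e.\ in $\Omega$. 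Since $u_*>0$ in $\Omega$ by Lemma \ref{positivo} and $t\mapsto t^p$ is an increasing bijection of $[0,\infty)$, the functions $\phi(s):=\tilde\phi(s^p)$ and $\psi(s):=\tilde\psi(s^p)$ are, respectively, increasing and decreasing, and satisfy $g^*=\phi(u_*)$ and $V_*=\psi(u_*)$, which is the assertion.

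First I would establish optimality of $g^*$ and $V_*$ for these functionals from the minimality of $I$, using the variational characterization \eqref{caract} and the fact that $u_*$ realizes $\lambda(g^*,V_*)=I$ with $\int_\Omega g^*u_*^p\,\rd x=1$. If some $\tilde V\in\R(V_0)$ satisfied $\int_\Omega \tilde V u_*^p\,\rd x<\int_\Omega V_* u_*^p\,\rd x$, then, as $u_*$ still meets the constraint $\int_\Omega g^*u_*^p\,\rd x=1$, \eqref{caract} gives $\lambda(g^*,\tilde V)\le\int_\Omega|\nabla u_*|^p+\tilde V u_*^p\,\rd x<\int_\Omega|\nabla u_*|^p+V_* u_*^p\,\rd x=I$, contradicting the definition of $I$; hence $V_*$ minimizes. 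For $g^*$ I would argue by scaling: if $\tilde g\in\R(g_0)$ had $\beta:=\int_\Omega\tilde g u_*^p\,\rd x>1=\int_\Omega g^* u_*^p\,\rd x$, then $v:=\beta^{-1/p}u_*$ obeys $\int_\Omega\tilde g v^p\,\rd x=1$, so $\lambda(\tilde g,V_*)\le\beta^{-1}\bigl(\int_\Omega|\nabla u_*|^p+V_* u_*^p\,\rd x\bigr)=\beta^{-1}I<I$, where strictness uses $I>0$ (a consequence of the uniform coercivity in Lemma \ref{coercividad}). This again contradicts minimality, so $g^*$ maximizes. In fact this merely reconfirms the way $g^*$ and $V_*$ were produced from the weak limits in Theorem \ref{existencia} via Theorem \ref{desrearr}.

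The real difficulty is the \emph{uniqueness} needed to invoke Theorem \ref{unicrearr}. In Burton's framework the maximizer (resp.\ minimizer) against the weight $u_*^p$ is unique exactly when $u_*^p$, equivalently $u_*$, possesses no level set of positive measure, i.e.\ $|\{x\in\Omega:u_*(x)=c\}|=0$ for every $c\in\RR$. For $c\le 0$ this is immediate, since $u_*>0$ in the connected domain $\Omega$ by Lemma \ref{positivo} and $u_*$ is continuous by the regularity theory for \eqref{eigen}. The delicate case is $c>0$: on such a level set one has $\nabla u_*=0$ a.e., and to rule out a flat spot of positive measure one must return to the equation \eqref{eigen} and exploit a unique-continuation type property for the $p$-Laplacian with the lower-order terms $V_*u_*^{p-1}$ and $\lambda g^* u_*^{p-1}$. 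I expect this measure-of-level-sets step, rather than the rearrangement bookkeeping, to be the crux of the proof; once it is secured, both extremizers are unique, Theorem \ref{unicrearr} applies, and the composition with $t\mapsto t^p$ described above finishes the argument.
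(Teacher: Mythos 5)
Your optimality arguments (that $V_*$ minimizes $L(V)=\int_\Omega V u_*^p\,\rd x$ and $g^*$ maximizes $H(g)=\int_\Omega g u_*^p\,\rd x$ over the respective rearrangement classes) coincide with the paper's Steps 1 and 3, and the final composition with $t\mapsto t^p$ is a harmless bookkeeping point that the paper glosses over. The gap is in your third paragraph: the uniqueness of the extremizers, which you correctly identify as the hypothesis needed for Theorem \ref{unicrearr}, is never proved. You reduce it to the claim that $u_*$ has no level set of positive measure and propose to obtain this from ``a unique-continuation type property'' for \eqref{eigen}; but no such property is proved in the paper or available in the literature for the $p$-Laplacian with $p\neq 2$, $N\ge 2$ (unique continuation is essentially open there), so this route does not close. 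Moreover, your ``exactly when'' is too strong: uniqueness of the extremizer depends on $g_0$ and $V_0$ as well --- if, say, $g_0$ is constant, then $\R(g_0)$ is a singleton and uniqueness is trivial regardless of the level sets of $u_*$.

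The paper obtains uniqueness by an elementary PDE argument that bypasses level sets entirely. If $W\in\R(V_0)$ is another minimizer of $L$, then $\int_\Omega W u_*^p\,\rd x=\int_\Omega V_* u_*^p\,\rd x$, hence
$$
I=\int_\Omega|\nabla u_*|^p+W(x)u_*^p\,\rd x\ge\lambda(g^*,W)\ge I,
$$
so $u_*$ attains the minimum in the characterization \eqref{caract} of $\lambda(g^*,W)$ as well; therefore $u_*$ is a weak solution of \eqref{eigen} both for $(g^*,V_*)$ and for $(g^*,W)$, with the same eigenvalue. Subtracting the two equations gives $(V_*(x)-W(x))u_*^{p-1}=0$ a.e.\ in $\Omega$, and since $u_*>0$ in $\Omega$ (Lemma \ref{positivo}) this forces $W=V_*$ a.e. The identical argument for a second maximizer $f$ of $H$ yields $\lambda(g^*,V_*)\left(g^*(x)-f(x)\right)u_*^{p-1}=0$ a.e., hence $f=g^*$ because the principal eigenvalue is positive. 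With uniqueness established, Theorem \ref{unicrearr} applies exactly as you intended; so the crux you flagged has this two-line solution --- subtract the two eigenvalue equations --- rather than a unique-continuation one.
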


\begin{proof} We proceed in four steps

{\em Step 1}. First we show that $V_*$ is a minimizer of the linear functional
$$
L(V) := \int_\Omega V(x)u_*^p \, \rd x
$$
relative to $V\in\overline{\R(V_0)}$.

We have that
$$
\int_\Omega g^*(x)u_*^p \, \rd x = 1
$$
and
$$
I = \lambda(g^*,V_*)=\int_\Omega |\nabla u_*|^p + V_*(x)u_*^p \, \rd x,
$$
then, for all $V\in\R(V_0)$,
$$
\int_\Omega |\nabla u_*|^p + V_*(x)u_*^p \, \rd x \le \lambda(g^*, V)\le \int_\Omega |\nabla u_*|^p + V(x)u_*^p\, \rd x
$$
and therefore
$$
\int_\Omega V_*(x)u_*^p \, \rd x \le \int_\Omega V(x)u_*^p
\, \rd x \quad \forall V\in\R(V_0).
$$
Thus, we can conclude that
$$
\int_\Omega V_*(x)u_*^p \, \rd x = \inf\left\{L(V):V\in\overline{\R(V_0)}\right\}.
$$

{\em Step 2}. We show that $V_*$ is the unique minimizer of $L(V)$ relative to $\R(V_0)$.

Suppose that $W$ is another minimizer of $L(V)$ relative to $\R(V_0)$, then
$$
\int_\Omega V_*(x)u_*^p \,\rd x = \int_\Omega W(x)u_*^p \, \rd x.
$$
Thus
\begin{align*}
I =& \lambda(g^*,V_*)\\
=& \int_\Omega |\nabla u_*|^p + V_*(x)u_*^p \,\rd x\\ 
=& \int_\Omega |\nabla u_*|^p + W(x)u_*^p \,\rd x\\
\ge& \lambda(g^*,W)\\
\ge& I.
\end{align*}
Hence $u_*$ is the positive eigenfunction associated to $\lambda(g^*,V_*) = \lambda(g^*,W)$. Then
\begin{eqnarray}
\label{primera}
-\Delta_p u_* + V_*(x) u_*^{p-1} &=& \lambda(g^*,V_*)g^*(x)u^{p-1} \quad \mbox{in } \Omega,\\
\label{segunda}
-\Delta_p u_* + W(x) u_*^{p-1} &=& \lambda(g^*,V_*)g^*(x)u^{p-1} \quad \mbox{in } \Omega.
\end{eqnarray}
Subtracting \eqref{segunda} from \eqref{primera}, we get
$$
(V_*(x)-W(x))u_*^{p-1} =0 \quad \mbox{a.e. in } \Omega,
$$
then $V_* = W$ a.e. in $\Omega$.

Thus, by Theorem \ref{unicrearr}, there exists decreasing
function $\psi$ such that
$$
V_* = \psi(u_*) \quad \mbox{a.e. in } \Omega.
$$

{\em Step 3}. Now, we show that $g*$ is a maximizer of the linear functional
$$
H(g) := \int_\Omega g(x)u_*^p \, \rd x
$$
relative to $g\in\overline{\R(g_0)}$.

We argue by contradiction, so assume that there exists $g\in\R(g_0)$ such that
$$
\alpha = \int_\Omega g(x)u_*^p \, \rd x > \int_\Omega g^*(x) u_*^p \,\rd x = 1
$$
and take $v = \alpha^{-1/p}u_*$. Then
$$
\int_\Omega g(x)v^p \,\rd x = 1
$$
and
$$
\int_\Omega |\nabla v|^p + V_*(x)v^p \, \rd x = \frac{1}{\alpha} \int_\Omega |\nabla u_*|^p + V_*(x)u_*^p \, \rd x = \frac{1}{\alpha}\lambda(g^*,V_*) < \lambda(g^*,V_*).
$$
Therefore
$$
\lambda(g,V_*) < \lambda(g^*,V_*),
$$
which contradicts the minimality of $\lambda(g^*,V_*)$.

{\em Step 4}. Finally,  we show that $g^*$ is the unique maximizer of $H(g)$ relative to $\R(g_0)$.

Assume that there exists another maximizer $f$ of $H(g)$ relative to $\R(g_0).$ Then
$$
\int_\Omega f(x)u_*^p \, \rd x = \int_\Omega g^*(x)u_*^p \, \rd x = 1
$$
and therefore
$$
I = \lambda(g^*,V_*) \le \lambda(f,V_*) \le \int_\Omega |\nabla u|^p + V_*(x)u_*^p \, \rd x = I,
$$
then $\lambda(g^*,V_*) = \lambda(f,V_*)$ and hence $u_*$ is the eigenfunction associated to $\lambda(g^*,V_*) = \lambda(f,V_*)$. Thus 
\begin{eqnarray}
\label{primeras}
-\Delta_p u_* + V_*(x) u_*^{p-1} &=& \lambda(g^*,V_*)g^*(x)u^{p-1} \quad \mbox{in } \Omega,\\
\label{segundas}
-\Delta_p u_* + V_*(x) u_*^{p-1} &=& \lambda(g^*,V_*)f(x)u^{p-1} \quad\,\, \mbox{in } \Omega.
\end{eqnarray}
Subtracting \eqref{segundas} from \eqref{primeras}, we get
$$
\lambda(g^*,V_*)\left(g_*(x)-f(x)\right)u_*^p = 0 \quad \mbox{a.e. in } \Omega,
$$
thus $g^*=f$ a.e. in $\Omega$.

Then, by Theorem \ref{unicrearr}, there exist increasing
function $\phi$ such that
$$
g^*=\phi(u_*) \quad \mbox{ a.e. in } \Omega.
$$
This finishes the proof.
\end{proof}


\section{Differentiation of $\lambda(g,V)$}

The first aim of this section is prove the continuity of the first positive eigenvalue $\lambda(g,V)$ respect to $g$ and $V.$ Then we proceed further and compute the derivative of $\lambda(g,V)$ with respect to perturbations in $g$ and $V$.

\begin{pr}\label{continuidad}
The first positive eigenvalue $\lambda(g,V)$ of
\eqref{eigen} is continuous with respect to $(g,V)\in \mathcal A$ where
$$
\mathcal A := \{ (g,V)\in \lq\times\lq\colon (g,V) \mbox{ satisfies (H1) and (H2)}\}.
$$
i.e.,
$$
\lambda(g_n,V_n) \to \lambda(g,V),
$$
when $(g_n,V_n)\to (g,V)$ strongly in $\lq\times \lq$ and $(g_n,V_n), (g,V)\in \mathcal A$.
\end{pr}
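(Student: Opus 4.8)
The plan is to work entirely from the variational characterization \eqref{caract} and prove the two one-sided bounds $\limsup_n \lambda(g_n,V_n)\le \lambda(g,V)$ and $\liminf_n \lambda(g_n,V_n)\ge \lambda(g,V)$ separately, writing $\lambda := \lambda(g,V)$ and $\lambda_n := \lambda(g_n,V_n)$. Throughout I would use that the assumption on $q$ gives $pq'<p^*$, so that $|w|^p\in L^{q'}(\Omega)$ for every $w\in\wpc$ and the embedding $\wpc\hookrightarrow L^{pq'}(\Omega)$ is compact. For the upper bound, let $u$ be the positive eigenfunction for $\lambda$ normalized by $\int_\Omega g\,u^p\,\rd x=1$, and set $c_n:=\int_\Omega g_n u^p\,\rd x$. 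Since $g_n\to g$ in $\lq$ and $u^p\in L^{q'}(\Omega)$, we have $c_n\to 1$, so $c_n>0$ for $n$ large and $v_n:=c_n^{-1/p}u$ is admissible in \eqref{caract} for $(g_n,V_n)$. Hence
$$
\lambda_n \le c_n^{-1}\left(\int_\Omega |\nabla u|^p\,\rd x + \int_\Omega V_n u^p\,\rd x\right),
$$
and because $V_n\to V$ in $\lq$ forces $\int_\Omega V_n u^p\,\rd x\to\int_\Omega V u^p\,\rd x$ while $c_n\to 1$, we obtain $\limsup_n\lambda_n\le \lambda$.

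For the lower bound I would first establish coercivity uniform in $n$. The argument of Lemma \ref{coercividad}, applied to $V$ itself (which satisfies (H1) and (H2) since $(g,V)\in\mathcal A$), provides $\delta_0>0$ with $J_V(w)\ge \delta_0\int_\Omega |\nabla w|^p\,\rd x$ for all $w\in\wpc$. Perturbing and using the Sobolev--Poincar\'e inequality $S_{pq'}\|w\|_{L^{pq'}(\Omega)}^p\le \int_\Omega |\nabla w|^p\,\rd x$,
$$
J_{V_n}(w) = J_V(w) + \int_\Omega (V_n-V)|w|^p\,\rd x \ge \left(\delta_0 - \frac{\|V_n-V\|_{\lq}}{S_{pq'}}\right)\int_\Omega |\nabla w|^p\,\rd x,
$$
so for $n$ large $J_{V_n}(w)\ge (\delta_0/2)\int_\Omega |\nabla w|^p\,\rd x$. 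Letting $u_n$ be the positive eigenfunction for $\lambda_n$ with $\int_\Omega g_n u_n^p\,\rd x=1$, the upper bound shows $\{\lambda_n\}$ is bounded, and $\lambda_n=J_{V_n}(u_n)\ge (\delta_0/2)\int_\Omega |\nabla u_n|^p\,\rd x$ then bounds $\{u_n\}$ in $\wpc$. Passing to a subsequence, $u_n\rightharpoonup u$ weakly in $\wpc$ and $u_n\to u$ strongly in $L^{pq'}(\Omega)$, whence $u_n^p\to u^p$ in $L^{q'}(\Omega)$.

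With this strong convergence the bilinear terms pass to the limit: splitting $\int_\Omega g_n u_n^p\,\rd x - \int_\Omega g u^p\,\rd x = \int_\Omega (g_n-g)u_n^p\,\rd x + \int_\Omega g(u_n^p-u^p)\,\rd x$ and using $\|g_n-g\|_{\lq}\to 0$ with the boundedness of $\|u_n^p\|_{L^{q'}(\Omega)}$, I get $\int_\Omega g u^p\,\rd x=1$, so $u$ is admissible for $\lambda$; the identical manipulation gives $\int_\Omega V_n u_n^p\,\rd x\to\int_\Omega V u^p\,\rd x$. Combining the weak lower semicontinuity of $w\mapsto\int_\Omega |\nabla w|^p\,\rd x$ with this convergence,
$$
\liminf_n \lambda_n \ge \int_\Omega |\nabla u|^p\,\rd x + \int_\Omega V u^p\,\rd x \ge \lambda,
$$
the last inequality by \eqref{caract} since $\int_\Omega g u^p\,\rd x=1$. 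A standard subsequence argument (every subsequence admits a further subsequence along which the above runs) upgrades this to the full sequence, and together with the upper bound it yields $\lambda_n\to\lambda$.

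The main obstacle is the lower bound: obtaining coercivity uniform in $n$ so that the eigenfunctions $u_n$ stay bounded in $\wpc$, and then being careful that each term $\int_\Omega g_n u_n^p\,\rd x$ and $\int_\Omega V_n u_n^p\,\rd x$ is a genuine product of an $\lq$-strongly-convergent factor with an $L^{q'}(\Omega)$-strongly-convergent factor. It is precisely the compact embedding $\wpc\hookrightarrow L^{pq'}(\Omega)$, valid because $pq'<p^*$, that makes $u_n^p$ converge strongly and rescues these products, whereas the Dirichlet term only enjoys weak lower semicontinuity, which is exactly what the chosen direction of the inequality requires.
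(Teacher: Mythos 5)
Your proof is correct and follows the same overall architecture as the paper's: the upper bound by testing the Rayleigh quotient for $(g_n,V_n)$ with a rescaled copy of the limit eigenfunction, and the lower bound by uniform coercivity, boundedness of the eigenfunctions $u_n$ in $\wpc$, compactness of the embedding $\wpc\hookrightarrow L^{pq'}(\Omega)$, and weak lower semicontinuity of the Dirichlet term. The one step you handle genuinely differently is the uniform coercivity of $J_{V_n}$. The paper asserts that, because $V_n\to V$ strongly in $\lq$, one of the two alternatives of (H2) holds with a constant $\delta_0$ uniform in $n$, and then argues as in Lemma \ref{coercividad}; this is clear for the alternative $\|V^-\|_{\lq}<S_{pq'}$ (since $V\mapsto V^-$ is continuous in $\lq$), but for the pointwise alternative $V\ge -S_p+\delta$ it is not, as strong $\lq$ convergence does not transfer a pointwise lower bound to the $V_n$. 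Your perturbation argument --- coercivity of the limit functional $J_V$ from Lemma \ref{coercividad} combined with $J_{V_n}(w)\ge J_V(w)-S_{pq'}^{-1}\|V_n-V\|_{\lq}\int_\Omega|\nabla w|^p\,\rd x$ --- uses nothing about the $V_n$ beyond strong convergence, so it covers both alternatives of (H2) at once and in fact repairs this soft spot in the paper's proof. Your explicit subsequence-of-a-subsequence step to upgrade the liminf bound to the full sequence is also a detail the paper leaves implicit.
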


\begin{proof} We know that
$$
\lambda(g_n,V_n) = \int_\Omega |\nabla u_n|^p + V_n(x)u_n^p \, \rd x
$$
and
$$
\lambda(g,V) = \int_\Omega |\nabla u|^p +
V(x) u^p \, \rd x,
$$
with
$$
\int_\Omega g_n(x)u_n^p \, \rd x = \int_\Omega
g(x) u^p \, \rd x = 1,
$$
where $u_n$ and $u$ are the positive eigenfunctions associated to $\lambda(g_n,V_n)$ and $\lambda(g,V)$ respectively.

We begin by observing that
$$
H(g_n) := \int_\Omega g_n(x) u^p \,\rd x = \int_\Omega (g_n(x)-g(x)) u^p \, \rd x + 1 \to 1,
$$
as $n \to \infty$. Then there exists $n_0\in \N$
such that
$$
H(g_n) > 0 \quad \forall n\ge n_0.
$$
Thus we take $v_n := H(g_n)^{-1/p} u$ and by \eqref{caract} we have
$$
\lambda(g_n,V_n) \le \int_\Omega |\nabla v_n|^p + V_n(x) v_n^p \, \rd x = \frac{1}{H(g_n)} \int_\Omega |\nabla u|^p + V_n(x) u^p \, \rd x.
$$
Therefore, taking limits when $g_n\to g$ and
$V_n\to V$ in $\lq,$ we get that
$$
\limsup_{n\to\infty}\lambda(g_n,V_n) \le \int_\Omega |\nabla u|^p + V(x) u^p \, \rd x = \lambda(g,V).
$$

On the other hand, as $V_n\to V$ strongly in $\lq$ it is easy to see that there exists $\delta_0>0$ such that
$$
\|V_n^-\|_{\lq}, \|V^-\|_{\lq} < S_{pq'} (1-\delta_0) \qquad \forall n\in\N,
$$
or
$$
V_n, V> -S_p + \delta_0 \qquad \forall n\in\N.
$$
Therefore, as $\{\lambda(g_n,V_n)\}_{n\in\N}$ is bounded, arguing as in Lemma \ref{coercividad} we have that $\{u_n\}_{n\in\N}$ is bounded in $\wpc$. Therefore there exists $v\in\wpc$ and a subsequence of $\{u_n\}_{n\in\N}$ (that we still denote by $\{u_n\}_{n\in\N}$) such that
\begin{eqnarray}
\label{weaklyd}
u_n &\rightharpoonup& v \quad \textrm{weakly in } \wpc,\\
\label{stronglyd}
u_n &\to& v \quad \textrm{strongly in } L^{pq'}(\Omega).
\end{eqnarray}
By \eqref{stronglyd} and as $g_n \to g$ in $\lq$ we have
that
$$
1 = \lim_{n\to\infty}\int_\Omega g_n(x)|u_n|^p \, \rd x
= \int_\Omega g(x)|v|^p \, \rd x.
$$
Finally, by \eqref{weaklyd}, \eqref{stronglyd} and, as $V_n \to V$ in $\lq$ we arrive at
\begin{align*}
\liminf_{n\to\infty}\lambda(g_n,V_n) &= \liminf_{n\to\infty} \int_\Omega|\nabla u_n|^p + V_n(x)u_n^p \, \rd x\\
&\ge \int_\Omega |\nabla v|^p+ V(x)|v|^p \, \rd x\\
&\ge \lambda(g,V)
\end{align*}
and the result follows.
\end{proof}

\begin{ob}
Observe that if instead of (H2) we required {\em only} that $V>-S_p + \delta$, the exact same proof of Proposition \ref{continuidad} gives the continuity of $\lambda(g,V)$ with respect to weak convergence.
\end{ob}

Now we arrive at the main result of the section, namely we compute the derivative of the first positive eigenvalue
$\lambda(g,V)$ with respect to perturbations in $g$ and $V$.

We begin by describing the kind of variations that we are going to consider. Let $W$ be a regular (smooth) vector field, globally Lipschitz, with support in $\Omega$ and let $\varphi_t:\rn\to\rn$ be the flow defined by
\begin{equation}\label{cv}
\begin{cases}
\frac{\rd}{\rd t}\varphi_t(x) = W(\varphi_t(x)) & t>0,\\
\varphi_0(x) = x & x\in\rn.
\end{cases}
\end{equation}
We have
$$
\varphi_t(x) = x+tW(x)+o(t) \quad\forall x\in\rn.
$$
Thus, if $g$ and $V$ are measurable functions that satisfy the assumptions (H1) and (H2), we define $g_t := g\circ\varphi_t^{-1}$ and $V_t := V\circ\varphi_t^{-1}$. Now, let
$$
\lambda(t) := \lambda(g_t,V_t)=\int_\Omega|\nabla u_t|^p + V_t(x) |u_t|^p \, \rd x,
$$
with
$$
\int_\Omega g_t(x)u_t^p \, \rd x = 1,
$$
where $u_t$ is the eigenfunction associated to $\lambda(t)$.

\begin{ob}\label{divW}
In order to this approach to be usefull for the optimization problem of the previous section, we need to guaranty that $g_t\in\R(g_0)$ and $V_t\in \R(V_0)$ whenever $g\in\R(g_0)$ and $V\in \R(V_0)$.

It is not difficult to check that this is true for incompressible deformation fields, i.e., for those $W$'s such that
$$
\div W = 0.
$$
\end{ob}

\begin{lem}\label{convergencia}
Given $f\in\lq$ then
$$
f_t := f\circ\varphi_t^{-1}\to f \textrm{ in } \lq, \mbox{ as } t\to 0^+.
$$
\end{lem}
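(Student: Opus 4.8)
The plan is to combine a density argument with the change-of-variables formula, exploiting that the flow map $\varphi_t$ is a diffeomorphism of $\Omega$ whose Jacobian is close to one for small $t$. Since $W$ is globally Lipschitz with support in $\Omega$, the flow $\varphi_t$ is defined for all $t$, equals the identity near $\partial\Omega$, and hence maps $\Omega$ onto $\Omega$; in particular $f_t=f\circ\varphi_t^{-1}$ is a well-defined element of $\lq$.

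First I would record the estimate on the Jacobian. From $\varphi_t(x)=x+tW(x)+o(t)$ one gets $D\varphi_t=I+t\,DW+o(t)$, so $\det D\varphi_t(y)\to 1$ uniformly in $y$ as $t\to 0^+$; hence there exist $t_0>0$ and a constant $C$ with $|\det D\varphi_t(y)|\le C$ for all $y\in\Omega$ and all $t\in[0,t_0]$. The change of variables $x=\varphi_t(y)$ then yields, for any $h\in\lq$,
$$
\int_\Omega |h(\varphi_t^{-1}(x))|^q \, \rd x = \int_\Omega |h(y)|^q \, |\det D\varphi_t(y)| \, \rd y \le C \|h\|_{\lq}^q,
$$
that is, $\|h\circ\varphi_t^{-1}\|_{\lq}\le C^{1/q}\|h\|_{\lq}$ for every $t\in[0,t_0]$.

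Next, given $\ep>0$, I would choose $\tilde f\in C(\overline{\Omega})$ with $\|f-\tilde f\|_{\lq}<\ep$, using the density of continuous functions in $\lq$. Writing $f_t-f=(f-\tilde f)\circ\varphi_t^{-1}+(\tilde f\circ\varphi_t^{-1}-\tilde f)+(\tilde f-f)$ and applying the triangle inequality together with the Jacobian bound gives
$$
\|f_t-f\|_{\lq}\le C^{1/q}\|f-\tilde f\|_{\lq}+\|\tilde f\circ\varphi_t^{-1}-\tilde f\|_{\lq}+\|\tilde f-f\|_{\lq}.
$$
The middle term tends to $0$: since $\varphi_t^{-1}(x)\to x$ uniformly on $\overline{\Omega}$ and $\tilde f$ is uniformly continuous, $\tilde f\circ\varphi_t^{-1}\to\tilde f$ uniformly, and $|\Omega|<\infty$ upgrades this to convergence in $\lq$. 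Letting $t\to 0^+$ leaves $\limsup_{t\to 0^+}\|f_t-f\|_{\lq}\le (C^{1/q}+1)\ep$, and since $\ep$ is arbitrary the lemma follows.

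The only point requiring care is the uniform control of the Jacobian $\det D\varphi_t$ and of the convergence $\varphi_t^{-1}(x)\to x$; both rest on the global Lipschitz bound on $W$, which makes the flow and its spatial derivative depend continuously on $t$ uniformly in $x$. Everything else is the standard \emph{approximate by continuous functions, transfer by change of variables} scheme, so I expect no genuine obstacle.
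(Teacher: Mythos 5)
Your proof is correct and follows essentially the same route as the paper's: approximate $f$ by a smooth (or continuous) function, control the term $(f-\tilde f)\circ\varphi_t^{-1}$ via the change-of-variables formula and the Jacobian bound $J\varphi_t=1+t\div W+o(t)$, handle the approximant term by uniform convergence of $\varphi_t^{-1}$ to the identity, and conclude with the triangle inequality. The only cosmetic differences are that the paper approximates in $C_c^\infty(\Omega)$ and invokes $C^1$ convergence of $\varphi_t^{-1}$ where you use $C(\overline\Omega)$ and uniform continuity; both work equally well.
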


\begin{proof}
Let $\varepsilon>0$ and let $g\in C_c^\infty(\Omega)$ fixed such that $\|f-g\|_{\lq}<\varepsilon$. By the usual change of variables formula, we have,
$$
\|f_t - g_t\|^q_{\lq} = \int_{\Omega} |f-g|^q J\varphi_t \, \rd x,
$$
where $g_t = g\circ\varphi_t^{-1}$ and $J\varphi_t$ is the Jacobian of $\varphi_t$. We know that
$$
J\varphi_t = 1 + t \div W + o(t).
$$
Here $\div W$ is the divergence of $W$. Then
$$
\|f_t - g_t\|^q_{\lq} = \int_{\Omega}|f-g|^q (1 + t \div W + o(t))\, \rd x.
$$
Then, there exist $t_1>0$ such that if $0<t<t_1$ then
$$
\|f_t-g_t\|_{\lq} < C\varepsilon,
$$
where $C$ is a constant independent of $t$. Moreover, since $\varphi_t^{-1} \to Id$ in the $C^1$ topology when $t\to0$ then $g_t = g\circ\varphi_t^{-1}\to g$ in the $C^1$ topology and therefore there exist $t_2>0$ such that if $0<t<t_2$ then
$$
\|g_t -g\|_{\lq} < \varepsilon.
$$
Finally, we have for all $0<t<t_0=\min\{t_1,t_2\}$ then
$$
\|f_t-f\|_{\lq} \le \|f_t-g_t\|_{\lq} + \|g_t-g\|_{\lq} + \|f-g\|_{\lq} \le C\varepsilon,
$$
where $C$ is a constant independent to $t$.
\end{proof}

By Proposition \ref{continuidad} and Lemma \ref{convergencia} we have that

\begin{co}\label{conv}
Let $g$ and $V$ be measurable functions that satisfy the
assumptions (H1) and (H2). Then, with the previous
notation, $\lambda(t)$ is continuous at $t=0,$ i.e.,
$$
\lambda(t) \to \lambda(0) = \lambda(g,V)\quad \mbox{as }
t\to0^+.
$$
\end{co}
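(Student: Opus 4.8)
The plan is to reduce the statement to a direct application of Proposition \ref{continuidad}, using Lemma \ref{convergencia} to control the $L^q$ convergence of the perturbed data. First I would apply Lemma \ref{convergencia} separately to $f=g$ and to $f=V$, obtaining
$$
g_t = g\circ\varphi_t^{-1}\to g \quad\text{and}\quad V_t = V\circ\varphi_t^{-1}\to V \quad\text{in }\lq,\ \text{as }t\to0^+.
$$
This is precisely the strong $\lq\times\lq$ convergence of the data that is hypothesized in Proposition \ref{continuidad}, so once the applicability of that result is justified, the continuity of $\lambda$ on $\mathcal A$ immediately yields $\lambda(g_t,V_t)\to\lambda(g,V)$, that is $\lambda(t)\to\lambda(0)$.

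The remaining point, and the one requiring some care, is to check that the pairs $(g_t,V_t)$ actually lie in $\mathcal A$ for all small $t$, so that Proposition \ref{continuidad} may be invoked. Condition (H1) is immediate: since $\varphi_t$ is a bi-Lipschitz diffeomorphism whose Jacobian $J\varphi_t = 1+t\div W+o(t)$ is bounded away from $0$ and $\infty$ for $t$ small, the change of variables formula shows $g_t,V_t\in\lq$. For (H2) I would argue by stability, distinguishing the two alternatives in the hypothesis. If $V$ satisfies the pointwise bound $V\ge -S_p+\delta$, then composition with a measure-equivalent diffeomorphism preserves essential lower bounds, so $V_t\ge -S_p+\delta$ as well; likewise $g_t^+\not\equiv 0$, since composition with $\varphi_t^{-1}$ cannot annihilate the positive part. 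If instead $V$ satisfies $\|V^-\|_{\lq}<S_{pq'}$, then because the map $f\mapsto f^-$ is (Lipschitz) continuous on $\lq$ and $V_t\to V$ in $\lq$, one has $\|V_t^-\|_{\lq}\to\|V^-\|_{\lq}<S_{pq'}$, hence $\|V_t^-\|_{\lq}<S_{pq'}$ for all $t$ sufficiently close to $0$. In either case $(g_t,V_t)\in\mathcal A$ for $t$ near $0$.

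With both ingredients available, the conclusion follows by inserting the convergence of the data into Proposition \ref{continuidad}. The only genuine obstacle is the verification of (H2) for small $t$, which is where the argument is not purely formal; everything else is a routine combination of the two quoted results, and I expect the write-up to be correspondingly short.
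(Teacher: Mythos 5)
Your proposal is correct and follows exactly the paper's route: the paper proves Corollary \ref{conv} by simply combining Lemma \ref{convergencia} (applied to $g$ and $V$) with Proposition \ref{continuidad}. The only difference is that you additionally verify that $(g_t,V_t)\in\mathcal A$ for small $t$ (stability of (H1) and (H2) under the flow), a point the paper leaves implicit; your argument for this --- the diffeomorphism preserving essential lower bounds in one case, and the Lipschitz continuity of $f\mapsto f^-$ on $\lq$ in the other --- is sound and makes the write-up more complete than the original.
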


\begin{lem}\label{uconv}
Let $g$ and $V$ be measurable functions that satisfy the
assumptions (H1) and (H2). Let $u_t$ be the normalized positive eigenfunction associated to $\lambda(t)$ with $t>0$. Then 
$$
\lim_{t\to 0^+} u_t = u_0 \quad \mbox{strongly in } \wpc.
$$
where $u_0$ is the unique normalized positive eigenfunction associated to $\lambda(g,V)$.
\end{lem}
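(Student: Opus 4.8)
The plan is to run a compactness argument along sequences $t_n\to 0^+$, identify every weak limit with $u_0$, and then upgrade to strong convergence in $\wpc$ by means of the uniform convexity of $L^p(\Omega)$. First I would establish a uniform $\wpc$ bound for the family $\{u_t\}$. By Lemma \ref{convergencia} we have $V_t\to V$ in $\lq$, so exactly as in the proof of Proposition \ref{continuidad} one finds $\delta_0>0$, independent of $t$ for $t$ small, making the functionals $J_{V_t}$ uniformly coercive, namely $J_{V_t}(w)\ge \delta_0\int_\Omega|\nabla w|^p\,\rd x$. Applying this to $w=u_t$ and using that $\lambda(t)=J_{V_t}(u_t)$ stays bounded as $t\to0^+$ (Corollary \ref{conv}), we obtain $\sup_t \int_\Omega|\nabla u_t|^p\,\rd x<\infty$.

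Next, fix any sequence $t_n\to0^+$. By the bound just obtained and the compact embedding $\wpc\hookrightarrow L^{pq'}(\Omega)$ (recall $pq'<p^*$), after passing to a subsequence we have $u_{t_n}\rightharpoonup u$ weakly in $\wpc$ and $u_{t_n}\to u$ strongly in $L^{pq'}(\Omega)$, with $u\ge0$. Since strong convergence in $L^{pq'}$ forces $u_{t_n}^p\to u^p$ in $L^{q'}(\Omega)$, while $g_{t_n}\to g$ and $V_{t_n}\to V$ in $\lq$ by Lemma \ref{convergencia}, the products converge in $L^1(\Omega)$; hence
$$
\int_\Omega g(x)u^p\,\rd x=\lim_{n\to\infty}\int_\Omega g_{t_n}(x)u_{t_n}^p\,\rd x=1,\qquad \int_\Omega V_{t_n}(x)u_{t_n}^p\,\rd x\to\int_\Omega V(x)u^p\,\rd x.
$$
In particular $u\not\equiv0$ and $u$ is admissible in \eqref{caract}.

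To close the argument I would invoke weak lower semicontinuity of $w\mapsto\int_\Omega|\nabla w|^p\,\rd x$, which together with the two limits above and Corollary \ref{conv} gives
$$
\lambda(g,V)=\lim_{n\to\infty}\lambda(t_n)\ge\int_\Omega|\nabla u|^p+V(x)u^p\,\rd x\ge\lambda(g,V),
$$
the last inequality being the variational characterization \eqref{caract}. Hence every inequality is an equality: $u$ is a normalized nonnegative minimizer of \eqref{caract}, i.e. an eigenfunction associated to $\lambda(g,V)$. By Lemmas \ref{positivo} and \ref{simple} such an eigenfunction is unique once we impose $u\ge0$ and the normalization $\int_\Omega gu^p\,\rd x=1$, so $u=u_0$. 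Moreover, since the potential terms already converge, the equality forces $\int_\Omega|\nabla u_{t_n}|^p\,\rd x\to\int_\Omega|\nabla u_0|^p\,\rd x$. Weak convergence $\nabla u_{t_n}\rightharpoonup\nabla u_0$ in $L^p(\Omega)$ together with this convergence of norms and the uniform convexity of $L^p(\Omega)$ (here $1<p<\infty$) upgrades to strong convergence $u_{t_n}\to u_0$ in $\wpc$. As the limit $u_0$ is independent of the chosen sequence and subsequence, the whole family converges, $u_t\to u_0$ strongly in $\wpc$ as $t\to0^+$.

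The main obstacle is the passage from weak to strong convergence, which does not follow from the compactness alone: one must rule out loss of energy in the gradient term. This is precisely what the chain of equalities forced by \eqref{caract} provides, yielding $\int_\Omega|\nabla u_{t_n}|^p\,\rd x\to\int_\Omega|\nabla u_0|^p\,\rd x$, after which uniform convexity finishes. A secondary subtlety is that the perturbed potentials $V_t$ need not belong to $\R(V_0)$, so the uniform coercivity cannot be quoted directly from Lemma \ref{coercividad}; it must be re-derived from $V_t\to V$ in $\lq$ exactly as in Proposition \ref{continuidad}.
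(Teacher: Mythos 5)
Your proof is correct and follows essentially the same route as the paper's: a uniform $\wpc$ bound via coercivity and boundedness of $\lambda(t)$, compactness along sequences $t_n\to0^+$, weak lower semicontinuity forcing the chain of equalities in \eqref{caract}, identification of the limit with $u_0$, and finally the upgrade to strong convergence from gradient-norm convergence plus uniform convexity. If anything, you are slightly more explicit than the paper in invoking Lemmas \ref{positivo} and \ref{simple} to identify the limit and in stating the subsequence principle that yields convergence of the whole family.
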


\begin{proof}
Form the previous corollary we deduce that $\lambda(t)$ is bounded and, as in the proof of Proposition \ref{continuidad}, we further deduce that $\{u_t\}$ is bounded in $\wpc$.

So, given $\{t_n\}_{n\in\N}$, we have that $\{u_{t_n}\}_{n\in\N}$ is bounded in $\wpc$ and therefore there exists $u_0\in\wpc$ and some subsequence (still denoted by $\{u_{t_n}\}_{n\in\N}$) such that
\begin{eqnarray}
\label{weaklyt}
u_{t_n} &\rightharpoonup& u_0 \quad \mbox{weakly in } \wpc,\\
\label{stronglyt}
u_{t_n} &\to& u_0 \quad \mbox{strongly in } L^{pq'}(\Omega).
\end{eqnarray}
Since $(g_{t_n}, V_{t_n})\to (g,V)$ strongly in $\lq\times\lq$ as $n\to\infty$ and by \eqref{stronglyt} we get
$$
1 = \lim_{n\to\infty}\int_\Omega g_{t_n}(x)|u_{t_n}|^p \, \rd x = \int_\Omega g(x)|u_0|^p \, \rd x
$$
and
$$
\lim_{n\to\infty}\int_\Omega V_{t_n}(x)|u_{t_n}|^p \, \rd
x = \int_\Omega	V(x)|u_0|^p \, \rd x.
$$
Thus, using \eqref{weaklyt},
\begin{align*}
\lambda(0) =& \lim_{n\to\infty}\lambda(t_n)\\
=& \lim_{n\to\infty}\int_\Omega|\nabla u_{t_n}|^p +
V_{t_n}(x)|u_{t_n}|^p \, \rd x\\
\ge& \int_\Omega |\nabla u_0|^p + V(x)|u_0|^p \, \rd x\\
\ge& \lambda(0),
\end{align*}
then $u_0$ is the a normalized eigenfunction associated to $\lambda(0)$ and, as $\{u_{t_n}\}_{n\in\N}$ are positive, it follows that $u_0$ is positive.

Moreover
$$
\|\nabla u_{t_n}\|_{L^p(\Omega)}\to\|\nabla
u_0\|_{L^p(\Omega)} \quad \mbox{as } n\to\infty.
$$
Then, using again \eqref{weaklyt}, we have
$$
u_{t_n}\to u_0 \quad \textrm{in } \wpc \mbox{ as } n \to \infty.
$$
as we wanted to show.
\end{proof}

\begin{ob}\label{remark}
It is easy to see that, as $\varphi_t\to Id$ in the $C^1$
topology, then from Lemma \ref{uconv} it follows that
$$
u_t\circ\varphi_t \to u_0 \quad \mbox{strongly in } \wpc \mbox{ as } t\to 0,
$$
when $u_t\to u_0$ strongly in $\wpc$.
\end{ob}

Now, we arrive at the main result of the section
\begin{te} \label{lambda'}
With the previous notation, if $g$ and $V$ are measurable functions that satisfy the assumptions (H1) and (H2), we have that $\lambda(t)$ is differentiable at $t=0$ and
\begin{align*}
\dfrac{\rd\lambda(t)}{\rd t}\Big|_{t=0} =& \int_\Omega\left(|\nabla u_0|^p + V(x)|u_0|^p\right) \div W\,\rd x - p\int_\Omega|\nabla u_0|^{p-2} \langle\nabla u_0, ^TW'\nabla u_0 ^T\rangle \,\rd x\\
& -\lambda(0) \int_\Omega g(x)|u_0|^p \div W \, \rd x,
\end{align*}
where $W'$ denotes the differential matrix of $W,$ $^TA$ is the transpose of the matrix $A$ and $u_0$ is the eigenfunction associated to $\lambda(0)=\lambda(g,V)$.
\end{te}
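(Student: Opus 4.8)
The plan is to freeze the domain by the change of variables $y=\varphi_t(x)$ and then reduce the computation to differentiating $t$-dependent coefficients against a \emph{fixed} function, thereby avoiding any differentiation of the eigenfunction itself. Set $w_t := u_t\circ\varphi_t$, so that $u_t=w_t\circ\varphi_t^{-1}$, write $A_t:=(D\varphi_t)^{-1}$ and let $J\varphi_t$ denote the Jacobian. The change of variables formula (as used in Lemma \ref{convergencia}), together with the chain rule, which after the substitution replaces $\nabla u_t$ by ${}^TA_t\,\nabla w_t$, turns the Rayleigh characterization \eqref{caract} into a problem on the fixed domain $\Omega$ with the fixed data $g,V$:
\begin{equation*}
\lambda(t)=R_t(w_t):=\int_\Omega\left(|{}^TA_t\,\nabla w_t|^p+V|w_t|^p\right)J\varphi_t\,\rd x,\qquad G_t(w_t):=\int_\Omega g|w_t|^p\,J\varphi_t\,\rd x=1,
\end{equation*}
where $w_t$ minimizes $R_t$ subject to $G_t=1$. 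At $t=0$ we have $\varphi_0=\mathrm{Id}$, $A_0=I$, $J\varphi_0=1$, and by Remark \ref{remark} (via Lemma \ref{uconv}) we have $w_t\to u_0$ strongly in $\wpc$ as $t\to0^+$.

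The key device is a two-sided comparison that sidesteps the derivative $\dot w_t$. For the upper bound, choose $c_t>0$ with $c_t^p\int_\Omega g|u_0|^p\,J\varphi_t\,\rd x=1$ (so $c_t\to1$) and use $c_tu_0$ as a competitor for $\lambda(t)=\min\{R_t:\,G_t=1\}$, giving $\lambda(t)\le R_t(c_tu_0)=:\Phi(t)$ with $\Phi(0)=\lambda(0)$. For the lower bound, choose $d_t>0$ with $d_t^p\int_\Omega g|w_t|^p\,\rd x=1$ (so $d_t\to1$ since $w_t\to u_0$) and use $d_tw_t$ as a competitor for $\lambda(0)=\min\{R_0:\,G_0=1\}$, giving $\lambda(0)\le R_0(d_tw_t)$. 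Hence
\begin{equation*}
\frac{R_t(w_t)-R_0(d_tw_t)}{t}\le\frac{\lambda(t)-\lambda(0)}{t}\le\frac{\Phi(t)-\Phi(0)}{t},
\end{equation*}
and it remains to show both outer quotients tend to the same limit.

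The derivatives of the frozen coefficients at $t=0$ follow from $\varphi_t(x)=x+tW(x)+o(t)$: one has $\frac{\rd}{\rd t}J\varphi_t|_{t=0}=\div W$ (as in Lemma \ref{convergencia}), and since $D\varphi_t=I+tW'+o(t)$ also $\frac{\rd}{\rd t}{}^TA_t|_{t=0}=-{}^TW'$. As $\xi\mapsto|\xi|^p$ is $C^1$, for fixed $w$ this gives
\begin{equation*}
\frac{\rd}{\rd t}\Big|_{t=0}|{}^TA_t\nabla w|^p=-p|\nabla w|^{p-2}\langle\nabla w,{}^TW'\nabla w\rangle,
\end{equation*}
so that, by the product rule,
\begin{equation*}
\frac{\partial R_t}{\partial t}\Big|_{t=0}(w)=\int_\Omega\big(|\nabla w|^p+V|w|^p\big)\div W\,\rd x-p\int_\Omega|\nabla w|^{p-2}\langle\nabla w,{}^TW'\nabla w\rangle\,\rd x.
\end{equation*}
Differentiating $\Phi(t)=c_t^p\int_\Omega(|{}^TA_t\nabla u_0|^p+V|u_0|^p)J\varphi_t\,\rd x$ at $t=0$, the factor $c_t^p$ contributes $\frac{\rd}{\rd t}c_t^p|_{t=0}\,\lambda(0)=-\lambda(0)\int_\Omega g|u_0|^p\div W\,\rd x$, so $\Phi'(0)$ equals exactly the claimed right-hand side. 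The lower quotient is handled by expanding $R_t(w_t)-R_0(d_tw_t)$ as the frozen increment of $R$ at the function $w_t$ plus the renormalization increment, which in the limit produces the same three integrals with $u_0$ in place of $w_t$.

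The hard part will be precisely this last limit passage, since the integrand in the lower quotient is evaluated at the moving function $w_t$ rather than at $u_0$. I would control it using the strong convergence $w_t\to u_0$ in $\wpc$ (Lemma \ref{uconv}) together with the uniform-in-$t$ $C^1$ smoothness of $\varphi_t$, $A_t$ and $J\varphi_t$ on the compact support of $W$: a mean-value expansion of $|{}^TA_t\nabla w_t|^p$ combined with the equi-integrability of $\{|\nabla w_t|^p\}$ supplied by the $L^p$ convergence of the gradients (through a Vitali/generalized dominated convergence argument, or by exploiting the convexity of $\xi\mapsto|\xi|^p$ to obtain matching one-sided bounds) shows that replacing $w_t$ by $u_0$ in the difference quotient introduces only an $o(1)$ error. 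Pinching between the two quotients then yields both the differentiability of $\lambda$ at $t=0$ and the stated formula.
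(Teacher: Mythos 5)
Your proposal is correct and follows essentially the same route as the paper: an upper bound by transporting $u_0$ forward (your competitor $c_t u_0$ in the frozen formulation is exactly the paper's test function $v_t = u_0\circ\varphi_t^{-1}$ after the change of variables), a lower bound via the pull-back $w_t = u_t\circ\varphi_t$, and pinching the difference quotient, with the limit passage resting on the strong convergence $w_t\to u_0$ in $\wpc$ (Lemma \ref{uconv} and Remark \ref{remark}), just as in the paper. The only difference is presentational: you phrase everything as frozen-coefficient functionals $R_t$, $G_t$ on the fixed domain, while the paper expands the transported integrals directly.
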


\begin{proof}
First we consider $v_t := u_0\circ\varphi_t^{-1}$. Then, by the change of variables formula we get
\begin{align*}
\int_\Omega g_t(x)|v_t|^p \, \rd x =& \int_\Omega g(x)|u_0|^p J\varphi_t 
\, \rd x\\
=& \int_\Omega g(x)|u_0|^p (1 + t \div W + o(t))\, \rd x\\
=& 1 + t\int_\Omega g(x)|u_0|^p \div W\,\rd x + o(t),\\
\int_\Omega V_t(x)|v_t|^p \, \rd x =& \int_\Omega V(x)|u_0|^p J\varphi_t \, \rd x\\
=& \int_\Omega V(x)|u_0|^p (1 + t \div W+o(t))\, \rd x\\
=& \int_\Omega V(x)|u_0|^p \, \rd x + t\int_\Omega V(x) |u_0|^p \div W\, \rd x + o(t)
\end{align*}
and
\begin{align*}
\int_\Omega |\nabla v_t|^p\, \rd x =&  \int_\Omega |^{T}[\varphi_t']^{-1}(x) \nabla u_0^T|^p J\varphi_t\, \rd x\\
=& \int_\Omega |(I-t ^TW'+ o(t)) \nabla u_0^T|^p (1 + t \div W+o(t))\, \rd x\\
=& \int_\Omega (|\nabla u_0|^p - tp|\nabla u_0|^{p-2} \langle\nabla u_0, ^T W'\nabla u_0^T\rangle) (1 + t \div W)\, \rd x + o(t)\\
=& \int_\Omega |\nabla u_0|^p \, \rd x + t \int_\Omega |\nabla u_0|^p \div W\, \rd x\\
&- tp \int_\Omega |\nabla u_0|^{p-2} \langle\nabla u_0, ^T W'\nabla u_0^T\rangle\, \rd x +o(t).
\end{align*}
Then, for $t$ small enough,
$$
\int_\Omega g_t(x) |v_t|^p \, \rd x > 0
$$
and therefore
$$
\lambda(t)\le\dfrac{\int_\Omega|\nabla v_t|^p V_t(x)+|v_t|^p \,
\rd x}{\int_\Omega g_t(x) |v_t|^p\, \rd x}.
$$
So
$$
\lambda(t) \int_\Omega g_t(x) |v_t|^p\, \rd x \le
\int_\Omega|\nabla v_t|^p V_t(x)|v_t|^p \, \rd x,
$$
then we have that
\begin{align*}
\lambda(t)\left(1 + t\int_\Omega g(x) |u_0|^p \div W\,\rd x\right) \le& \int_\Omega |\nabla u_0|^p +
V(x)|u_0|^p \, \rd x\\
&+ t \int_\Omega\left( |\nabla u_0|^p + V(x) |u_0|^p\right) \div W\, \rd x\\
&- tp \int_\Omega |\nabla u_0|^{p-2} \langle\nabla u_0, ^T W'\nabla u_0^T\rangle\, \rd x +o(t)\\
=& \lambda(0)+ t \int_\Omega\left( |\nabla u_0|^p + V(x) |u_0|^p\right) \div W\, \rd x\\
&- tp \int_\Omega |\nabla u_0|^{p-2} \langle\nabla u_0, ^T W'\nabla u_0^T\rangle\, \rd x +o(t)
\end{align*}
and we get that
\begin{align*}
\frac{\lambda(t)-\lambda(0)}{t}\le &
\int_\Omega (|\nabla u_0|^p + V(x) |u_0|^p) \div W\, \rd x\\
&- p \int_\Omega |\nabla u_0|^{p-2} \langle\nabla
u_0, ^T W'\nabla u_0^T\rangle\, \rd x\\
& - \lambda(t)\int_\Omega g(x)|u_0|^p \div W\,\rd x
+ O(t).
\end{align*}
In a similar way, if we take $w_t=u_t\circ\varphi_t$ we have that
\begin{align*}
\frac{\lambda(t)-\lambda(0)}{t} \ge& \int_\Omega
(|\nabla w_t|^p + V(x)|w_t|^p) \div W \, \rd x\\
& - p\int_\Omega|\nabla w_t|^{p-2}\langle\nabla w_t, ^TW'\nabla w_t^T\rangle \, \rd x\\
& - \lambda(0)\int_\Omega g(x)|w_t|^p \div W \, \rd x + O(t).
\end{align*}
Thus, taking limit in the two last inequalities as $t\to 0^+,$ by the Corollary \ref{conv} and Remark \ref{remark}, we get that
\begin{align*}
\lim_{t\to 0^+} \frac{\lambda(t)-\lambda(0)}{t}=& \int_\Omega\left(|\nabla u_0|^p + V(x)|u_0|^p\right) \div W\, \rd x\\
& - p\int_\Omega|\nabla u_0|^{p-2}\langle\nabla u_0, ^TW'\nabla u_0 ^T\rangle \,\rd x\\
& - \lambda(0)\int_\Omega g(x)|u_0|^p \div W \, \rd x.
\end{align*}
This finishes the proof.
\end{proof}

\begin{ob}\label{lambda'divW}
When we work in the class of rearrangements of a fixed pair $(g_0, V_0)$, as was mentioned in Remark \ref{divW}, we need the deformation field $W$ to verified $\div W = 0$. So, in this case, the formula for  $\lambda'(0)$ reads,
$$
\dfrac{\rd\lambda(t)}{\rd t}\Big|_{t=0} =
-p\int_\Omega|\nabla u_0|^{p-2}\langle\nabla u_0,
^TW'\nabla u_0 ^T\rangle \,\rd x.
$$
\end{ob}

In order to improve the expression for the formula of
$\lambda'(0)$ we need a lemma that will allow us to regularized
problem \eqref{eigen} since solutions to \eqref{eigen} are $C^{1,\delta}$ for some $\delta>0$ but are not $C^2$ nor $W^{2,q}$ in general (see \cite{T}).

\begin{lem}\label{regularizacion}
Let $V, g$ be measurable functions that satisfy the assumptions (H1) and (H2) and let $V_\ep, g_\ep\in C_0^\infty(\Omega)$ be such that $V_\ep\to V$ and $g_\ep\to g$ in $L^q(\Omega)$. Let
$$
\lambda_\ep := \min_{\genfrac{}{}{0cm}{}{u\in\wpc}{\int_\Omega g_\ep(x)|v|^p \, \rd x = 1}}\int_\Omega (|\nabla v|^2 + \ep^2)^{(p-2)/2} |\nabla v|^2 + V_\ep(x) |v|^p\, \rd x.
$$
Finally, let $u_\ep$ be the unique normalized positive eigenfunction associated to $\lambda_\ep$.

Then, $\lambda_\ep\to \lambda(g,V)$ and $u_\ep \to u$ strongly in $\wpc$ where $u_0$ is the unique normalized positive eigenfunction associated to $\lambda(g,V)$.
\end{lem}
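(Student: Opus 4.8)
The plan is to treat the statement as an approximation result in the spirit of Proposition~\ref{continuidad} and Lemma~\ref{uconv}, now perturbing the coefficients and the operator simultaneously. Writing the regularized energy as
$$
E_\ep(v) := \int_\Omega (|\nabla v|^2+\ep^2)^{(p-2)/2}|\nabla v|^2\,\rd x,
$$
so that $\lambda_\ep = E_\ep(u_\ep)+\int_\Omega V_\ep u_\ep^p\,\rd x$, the whole argument rests on a few elementary pointwise facts about $\phi_\ep(s):=(s^2+\ep^2)^{(p-2)/2}s^2$: it converges to $s^p$ for each fixed $s$ as $\ep\to0$; for $p\ge2$ one has $\phi_\ep(s)\ge s^p$, while for $1<p<2$ one has $\phi_\ep(s)\le s^p$; and in the subquadratic range $\phi_\ep$ is monotone in $\ep$, namely $\phi_\ep(s)\ge\phi_\delta(s)$ whenever $\ep\le\delta$. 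Using these bounds one checks, arguing as in Lemma~\ref{coercividad}, a uniform coercivity estimate for the regularized functionals (the only new point being, when $1<p<2$, to absorb the gap between $\phi_\ep(s)$ and $s^p$ by splitting $\Omega$ at $\{|\nabla v|=M\ep\}$ with $M$ large and using $\|V_\ep^-\|_{\lq}<S_{pq'}(1-\delta_0)$ for $\ep$ small). Once $\lambda_\ep$ is bounded, this gives a uniform $\wpc$ bound on the minimizers $u_\ep$.

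For the upper bound I would use $u_0$ as a competitor. Since $g_\ep\to g$ in $\lq$ and $u_0\in L^{pq'}(\Omega)$, the number $H_\ep:=\int_\Omega g_\ep u_0^p\,\rd x$ tends to $\int_\Omega g u_0^p\,\rd x=1$, so $v_\ep:=H_\ep^{-1/p}u_0$ is admissible for $\ep$ small and $\lambda_\ep\le E_\ep(v_\ep)+\int_\Omega V_\ep v_\ep^p\,\rd x$. A dominated-convergence argument (the integrands converge pointwise to $|\nabla u_0|^p$ and are dominated by a fixed $L^1(\Omega)$ function) together with $V_\ep\to V$ in $\lq$ then yields $\limsup_{\ep\to0}\lambda_\ep\le\int_\Omega|\nabla u_0|^p+V u_0^p\,\rd x=\lambda(g,V)$.

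For the lower bound I would fix a sequence $\ep_n\to0$, extract through the coercivity bound a subsequence with $u_{\ep_n}\rightharpoonup w$ weakly in $\wpc$ and strongly in $L^{pq'}(\Omega)$, and pass to the limit in the constraint and in the potential term exactly as in Lemma~\ref{uconv} using $g_{\ep_n}\to g$ and $V_{\ep_n}\to V$ in $\lq$. The crucial and hardest step is the lower semicontinuity of the regularized gradient energy, $\liminf_n E_{\ep_n}(u_{\ep_n})\ge\int_\Omega|\nabla w|^p\,\rd x$. For $p\ge2$ this is immediate from $E_\ep(v)\ge\int_\Omega|\nabla v|^p\,\rd x$ and the weak lower semicontinuity of $v\mapsto\int_\Omega|\nabla v|^p\,\rd x$. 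For $1<p<2$, where instead $E_\ep(v)\le\int_\Omega|\nabla v|^p\,\rd x$, I would exploit the monotonicity in $\ep$: for $\ep_n\le\delta$ one has $E_{\ep_n}(u_{\ep_n})\ge E_\delta(u_{\ep_n})$, and writing $(|\xi|^2+\delta^2)^{(p-2)/2}|\xi|^2=(|\xi|^2+\delta^2)^{p/2}-\delta^2(|\xi|^2+\delta^2)^{(p-2)/2}$ with the subtracted term integrating to at most $\delta^p|\Omega|$, the convex and hence weakly lower semicontinuous functional $v\mapsto\int_\Omega(|\nabla v|^2+\delta^2)^{p/2}\,\rd x$ gives $\liminf_n E_{\ep_n}(u_{\ep_n})\ge\int_\Omega|\nabla w|^p\,\rd x-\delta^p|\Omega|$; letting $\delta\to0$ removes the error. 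Combined with the constraint $\int_\Omega g\,w^p\,\rd x=1$, the characterization \eqref{caract} gives $\liminf_n\lambda_{\ep_n}\ge\lambda(g,V)$. Hence $\lambda_{\ep_n}\to\lambda(g,V)$, and equality forces $w$ to be a normalized positive eigenfunction of $\lambda(g,V)$; by the simplicity of Lemma~\ref{simple} it coincides with $u_0$, and the usual subsequence argument upgrades this to convergence of the full family.

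Finally, strong convergence in $\wpc$ follows from norm convergence plus uniform convexity. Since $\lambda_\ep\to\lambda(g,V)$ and $\int_\Omega V_\ep u_\ep^p\,\rd x\to\int_\Omega V u_0^p\,\rd x$, we obtain $E_\ep(u_\ep)\to\int_\Omega|\nabla u_0|^p\,\rd x$. For $p\ge2$ the bound $\int_\Omega|\nabla u_\ep|^p\,\rd x\le E_\ep(u_\ep)$ and weak lower semicontinuity force $\int_\Omega|\nabla u_\ep|^p\,\rd x\to\int_\Omega|\nabla u_0|^p\,\rd x$; for $1<p<2$ the same follows from the uniform estimate $0\le s^p-\phi_\ep(s)\le C\ep^p$ (split at $s=\ep$), which gives $\int_\Omega|\nabla u_\ep|^p\,\rd x-E_\ep(u_\ep)\to0$. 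In both cases $u_\ep\rightharpoonup u_0$ weakly together with $\|\nabla u_\ep\|_{L^p(\Omega)}\to\|\nabla u_0\|_{L^p(\Omega)}$ yields strong convergence in $\wpc$, as in the closing lines of Lemma~\ref{uconv}. The main obstacle throughout is precisely this lower semicontinuity of the regularized energy in the subquadratic range $1<p<2$, which is why the monotonicity comparison with the genuinely convex functional $\int_\Omega(|\nabla v|^2+\delta^2)^{p/2}\,\rd x$ is the heart of the proof.
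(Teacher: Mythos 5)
Your proposal is correct, but its central step is genuinely different from the paper's. For the lower bound $\liminf_{\ep\to0^+}\lambda_\ep\ge\lambda(g,V)$, the paper never touches the minimizers $u_\ep$ at that stage: it uses the pointwise inequality $(|\nabla v|^2+\ep^2)^{(p-2)/2}|\nabla v|^2\ge|\nabla v|^p$ to get $\lambda_\ep\ge\lambda(g_\ep,V_\ep)$ for every admissible $v$, and then simply invokes Proposition \ref{continuidad} to conclude $\lambda(g_\ep,V_\ep)\to\lambda(g,V)$; the compactness argument on $u_\ep$ appears only afterwards, to upgrade to strong $W^{1,p}_0(\Omega)$ convergence (and there the same pointwise inequality plus weak lower semicontinuity yields the squeeze, exactly as in your last paragraph). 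Your route instead proves the lower bound by direct compactness of the $u_\ep$ and weak lower semicontinuity of the regularized energy, bypassing Proposition \ref{continuidad}. What each approach buys: the paper's comparison is shorter and elegantly reuses the continuity result, but the inequality $(s^2+\ep^2)^{(p-2)/2}s^2\ge s^p$ on which it rests holds only for $p\ge2$ — for $1<p<2$ the exponent $(p-2)/2$ is negative and the inequality reverses, so the paper's proof as written has a gap in the subquadratic range. Your monotonicity trick — bounding $E_{\ep}$ below by $E_\delta$ for $\ep\le\delta$, comparing with the convex, weakly lower semicontinuous functional $v\mapsto\int_\Omega(|\nabla v|^2+\delta^2)^{p/2}\,\rd x$ up to an error $\delta^p|\Omega|$, then letting $\delta\to0$ — together with the uniform estimate $0\le s^p-(s^2+\ep^2)^{(p-2)/2}s^2\le\ep^p$ (which follows cleanly from the identity $(s^2+\ep^2)^{(p-2)/2}s^2=(s^2+\ep^2)^{p/2}-\ep^2(s^2+\ep^2)^{(p-2)/2}$, with no splitting needed) repairs exactly this defect, so your argument covers all $1<p<\infty$ at the cost of a longer, self-contained semicontinuity analysis.
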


\begin{proof}
First, observe that, as $g_\ep\to g$ in $L^q(\Omega)$ if $u_0$ is the normalized positive eigenfunction associated to $\lambda(g,V)$ we have that
$$
\int_\Omega g_\ep(x) |u_0|^p\, \rd x >0.
$$
Then, taking $u_0$ in the characterization of $\lambda_\ep$ we get
$$
\lambda_\ep \le \dfrac{\int_\Omega (|\nabla u_0|^2 + \ep^2)^{(p-2)/2} |\nabla u_0|^2 + V_\ep(x) |u_0|^p\, \rd x} {\int_\Omega g_\ep(x)|u_0|^p\, \rd x}.
$$
Hence, passing to the limit as $\ep\to 0^+$ we arrive at
$$
\limsup_{\ep\to 0^+} \lambda_\ep \le \lambda(g,V).
$$

Now, for any $v\in\wpc$ normalized such that
$$
\int_{\Omega} g_\ep(x) |v|^p\, \rd x = 1,
$$
we have that
$$
\int_\Omega (|\nabla v|^2 + \ep^2)^{(p-2)/2} |\nabla v|^2 + V_\ep(x) |v|^p\, \rd x\ge \int_\Omega |\nabla v|^p + V_\ep(x) |v|^p\, \rd x\ge \lambda(g_\ep, V_\ep),
$$
therefore $\lambda_\ep\ge \lambda(g_\ep, V_\ep)$.

Now, by Proposition \ref{continuidad}, we have that $\lambda(g_\ep, V_\ep)\to \lambda(g,V)$ as $\ep\to 0^+$. So
$$
\liminf_{\ep\to 0^+}\lambda_\ep\ge \lambda(g,V).
$$

Finally, from the convergence of the eigenvalues, it is easy to see that the normalized eigenfunctions $u_\ep$ associated to $\lambda_\ep$ are bounded in $\wpc$ uniformly in $\ep>0$. Therefore, there exists a sequence,
that we still call $\{u_\ep\}$, and a function $u\in\wpc$ such that
\begin{align*}
& u_\ep \rightharpoonup u \qquad \mbox{weakly in }\wpc,\\
& u_\ep \to u \qquad \mbox{strongly in } L^{pq'}(\Omega).
\end{align*}
Recall that our assumptions on $q$ imply that $pq'<p^*$.

Hence,
$$
\int_\Omega g(x) |u|^p\, \rd x = \lim_{\ep\to 0^+} \int_\Omega g_\ep(x) |u_\ep|^p\, \rd x = 1,
$$
and so
\begin{align*}
\lambda(g,V) =& \lim_{\ep\to 0^+} \lambda_\ep\\
=& \lim_{\ep\to 0^+}\int_\Omega (|\nabla u_\ep|^2 + \ep^2)^{(p-2)/2} |\nabla u_\ep|^2 + V_\ep(x) |u_\ep|^p\, \rd x\\
\ge& \int_\Omega |\nabla u|^p + V(x) |u|^p\, \rd x\\
\ge& \lambda(g,V).
\end{align*}

These imply that $u = u_0$ the unique normalized positive eigenfunction associated to $\lambda(g,V)$ and that $\|u_\ep\|_{\wpc}\to\|u\|_{\wpc}$ as $\ep\to 0^+$. So
$$
u_\ep \to u_0\qquad \mbox{strongly in }\wpc.
$$
This finishes the proof.
\end{proof}

\begin{ob}\label{ecu.regularizadas}
Observe that the eigenfunctions $u_\ep$ are weak solutions to
$$
\begin{cases}
-\div((|\nabla u_\ep|^2 + \ep^2)^{(p-2)/2} \nabla u_\ep) + V_\ep(x) |u_\ep|^{p-2}u_\ep = \lambda_\ep g_\ep(x) |u_\ep|^{p-2}u_\ep & \mbox{in }\Omega,\\
u = 0 & \mbox{on }\partial\Omega.
\end{cases}
$$
Therefore, by the classical regularity theory (see \cite{LU}), the functions $u_\ep$ are $C^{2,\delta}$ for some $\delta>0$.
\end{ob}

With these preparatives we can now prove the following Theorem.
\begin{te}\label{teo.principal}
With the assumptions and notations of Theorem \ref{lambda'}, we have that
$$
\frac{\rd \lambda(t)}{\rd t}\Big|_{t=0} = \lambda'(0) = \int_{\Omega} (V(x) - \lambda(0) g(x)) \div(|u_0|^pW)\, \rd x,
$$
for every field $W$ such that $\div W=0$.
\end{te}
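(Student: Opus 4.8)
The plan is to start from the reduced expression of Remark \ref{lambda'divW}, which holds under our standing assumption $\div W=0$, namely
\begin{equation*}
\lambda'(0) = -p\int_\Omega|\nabla u_0|^{p-2}\langle\nabla u_0,\, {}^{T}W'\nabla u_0^{T}\rangle\,\rd x,
\end{equation*}
and to recast its right hand side in divergence form by a single integration by parts, using the eigenvalue equation to remove the second order derivatives of $u_0$. Formally, since $\div W=0$ one has $\div(|u_0|^pW)=p\,u_0^{p-1}\nabla u_0\cdot W$, while $u_0$ solves \eqref{eigen} with $\lambda=\lambda(0)$, so that $(V-\lambda(0)g)\,u_0^{p-1}=\Delta_p u_0=\div(|\nabla u_0|^{p-2}\nabla u_0)$. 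Substituting and integrating by parts (the boundary term vanishes because $W$ is compactly supported in $\Omega$) would give
\begin{equation*}
\int_\Omega(V-\lambda(0)g)\div(|u_0|^pW)\,\rd x = -p\int_\Omega|\nabla u_0|^{p-2}\nabla u_0\cdot\nabla(\nabla u_0\cdot W)\,\rd x.
\end{equation*}
Expanding $\nabla(\nabla u_0\cdot W)$ splits the integrand into $|\nabla u_0|^{p-2}\langle\nabla u_0,\,{}^{T}W'\nabla u_0^{T}\rangle$ and the term $\tfrac12|\nabla u_0|^{p-2}\nabla(|\nabla u_0|^2)\cdot W=\tfrac1p\nabla(|\nabla u_0|^p)\cdot W$; a further integration by parts turns the latter into $-\tfrac1p\int_\Omega|\nabla u_0|^p\div W\,\rd x$, which vanishes exactly because $\div W=0$. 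What remains is the formula of Remark \ref{lambda'divW}, establishing the identity.

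The obstacle is that this computation is not licit for $u_0$ itself: by \cite{T} the eigenfunction is only $C^{1,\delta}$, so $\nabla(\nabla u_0\cdot W)$ need not exist pointwise and $\div(|\nabla u_0|^{p-2}\nabla u_0)$ is a priori only a distribution. I would therefore carry out the whole argument at the regularized level. By Remark \ref{ecu.regularizadas} the eigenfunctions $u_\ep$ of Lemma \ref{regularizacion} are $C^{2,\delta}$ and solve $\div((|\nabla u_\ep|^2+\ep^2)^{(p-2)/2}\nabla u_\ep)=(V_\ep-\lambda_\ep g_\ep)u_\ep^{p-1}$ classically, so the two integrations by parts are fully justified with $u_0,V,g,\lambda(0)$ and $|\nabla u_0|^{p-2}$ replaced by $u_\ep,V_\ep,g_\ep,\lambda_\ep$ and $(|\nabla u_\ep|^2+\ep^2)^{(p-2)/2}$; note that the analogue of the vanishing term, namely $\tfrac1p\nabla((|\nabla u_\ep|^2+\ep^2)^{p/2})\cdot W$, still integrates to zero by $\div W=0$. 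This yields, for every $\ep>0$,
\begin{equation*}
\int_\Omega(V_\ep-\lambda_\ep g_\ep)\div(|u_\ep|^pW)\,\rd x = -p\int_\Omega(|\nabla u_\ep|^2+\ep^2)^{(p-2)/2}\langle\nabla u_\ep,\,{}^{T}W'\nabla u_\ep^{T}\rangle\,\rd x.
\end{equation*}

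It then remains to let $\ep\to 0^+$. Lemma \ref{regularizacion} gives $\lambda_\ep\to\lambda(0)$ and $u_\ep\to u_0$ strongly in $\wpc$, together with $V_\ep\to V$ and $g_\ep\to g$ in $\lq$. On the right hand side the integrand is bounded by $(|\nabla u_\ep|^2+\ep^2)^{p/2}\,\|W'\|_\infty$, and the strong $L^p$ convergence $\nabla u_\ep\to\nabla u_0$ forces $\int(|\nabla u_\ep|^2+\ep^2)^{p/2}\to\int|\nabla u_0|^p$; a standard equi-integrability (Vitali) argument then gives convergence to the corresponding integral with $|\nabla u_0|^{p-2}$. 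On the left hand side, writing the integrand as $p(V_\ep-\lambda_\ep g_\ep)u_\ep^{p-1}\nabla u_\ep\cdot W$, I would pass to the limit by Hölder together with the strong $\lq$ and $\wpc$ convergences, exploiting that $W$ is supported in a fixed compact $K\subset\subset\Omega$. I expect this last interchange of limit and integration to be the only genuinely delicate step, since controlling the mixed term $(V_\ep-\lambda_\ep g_\ep)u_\ep^{p-1}\nabla u_\ep$ in $L^1(K)$ uniformly in $\ep$ requires the strong convergences (and, should the Sobolev exponents be tight, a uniform interior gradient bound for $u_\ep$); the algebraic heart of the theorem, by contrast, is merely the cancellation produced by $\div W=0$. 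Passing to the limit in both sides reduces the $\ep$-identity to $\int_\Omega(V-\lambda(0)g)\div(|u_0|^pW)\,\rd x=-p\int_\Omega|\nabla u_0|^{p-2}\langle\nabla u_0,\,{}^{T}W'\nabla u_0^{T}\rangle\,\rd x=\lambda'(0)$, which is the claim.
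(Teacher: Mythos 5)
Your proposal is correct and follows essentially the same route as the paper: starting from Remark \ref{lambda'divW}, converting the gradient term via the splitting $\nabla(\nabla u_0\cdot W)={}^TW'\nabla u_0^T+D^2u_0\,W^T$ together with the eigenvalue equation and the cancellation forced by $\div W=0$ (the paper runs the identical computation in the opposite direction, adding $\int_\Omega\div(|\nabla u_0|^pW)\,\rd x=0$ and using $\langle\nabla u_0,W\rangle$ as test function in the weak formulation), and justifying the second derivatives through the regularized eigenfunctions $u_\ep$ of Lemma \ref{regularizacion} and Remark \ref{ecu.regularizadas}. If anything, you are more explicit than the paper about the passage to the limit $\ep\to0^+$, which the paper dispatches by noting that only first derivatives of $u_\ep$ survive in the final formula; the uniform interior gradient bound you flag for the tight-exponent regime is a genuine point that the paper's ``formal'' argument glosses over.
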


\begin{proof}
During the proof of the Theorem, we will required the eigenfunction $u_0$ to be $C^2$. As it is well known (see \cite{T}), this is not true.

In order to overcome this difficulty, we regularize the problem and work with the regularized eigenfunctions $u_\ep$ defined in Lemma \ref{regularizacion}.

Since in the resulting formula only appears up to the first derivatives of $u_\ep$ and $u_\ep\to u_0$ strongly in $\wpc$ the result will follows.

For the sake of simplicity, we choose to work formally with $u_0$. The changes in order to make this argument rigurouse are straight forward.

Let $W\in C^1_0(\Omega; \rn)$. Then, we have that
$$
\int_\Omega \div(|\nabla u_0|^p W)\, \rd x = 0.
$$
So,
\begin{align*}
p\int_\Omega|\nabla u_0|^{p-2}\langle\nabla u_0,
^TW'\nabla u_0 ^T\rangle \,\rd x =& p\int_\Omega|\nabla u_0|^{p-2}\langle\nabla u_0, ^TW'\nabla u_0 ^T\rangle \,\rd x\\
&+ \int_\Omega \div(|\nabla u_0|^p W)\, \rd x\\
=& p\int_\Omega|\nabla u_0|^{p-2}\langle\nabla u_0,
^TW'\nabla u_0 ^T\rangle \,\rd x\\
&+ p\int_\Omega |\nabla u_0|^{p-2} \langle \nabla u_0, D^2 u_0 W^T\rangle\, \rd x\\
=& p\int_\Omega |\nabla u_0|^{p-2} \langle \nabla u_0, ^TW'\nabla u_0^T + D^2 u_0 W^T\rangle\, \rd x\\
=& p\int_\Omega |\nabla u_0|^{p-2} \langle \nabla u_0, \nabla\langle \nabla u_0, W\rangle\rangle\, \rd x.
\end{align*}
Now, we use the fact that $u_0$ is a weak solution to
\eqref{eigen} to get
\begin{align*}
\lambda'(0) =& p\int_\Omega (V(x) - \lambda(0)g(x)) |u_0|^{p-2} u_0 \langle \nabla u_0, W\rangle\, \rd x\\
=& \int_{\Omega} (V(x) - \lambda(0)g(x)) \div(|u_0|^p W)\, \rd x.
\end{align*}
The proof is now complete.
\end{proof}

\bibliographystyle{plain}

\begin{thebibliography}{10}

\bibitem{AsHa}
Mark~S. Ashbaugh and Evans~M. Harrell, II.
\newblock Maximal and minimal eigenvalues and their associated nonlinear
  equations.
\newblock {\em J. Math. Phys.}, 28(8):1770--1786, 1987.

\bibitem{Bu1}
G.~R. Burton.
\newblock Rearrangements of functions, maximization of convex functionals, and
  vortex rings.
\newblock {\em Math. Ann.}, 276(2):225--253, 1987.

\bibitem{Bu2}
G.~R. Burton.
\newblock Variational problems on classes of rearrangements and multiple
  configurations for steady vortices.
\newblock {\em Ann. Inst. H. Poincar\'e Anal. Non Lin\'eaire}, 6(4):295--319,
  1989.

\bibitem{CEP}
Fabrizio Cuccu, Behrouz Emamizadeh, and Giovanni Porru.
\newblock Nonlinear elastic membranes involving the {$p$}-{L}aplacian operator.
\newblock {\em Electron. J. Differential Equations}, pages No. 49, 10 pp.
  (electronic), 2006.

\bibitem{CEP2}
Fabrizio Cuccu, Behrouz Emamizadeh, and Giovanni Porru.
\newblock Optimization of the first eigenvalue in problems involving the
  {$p$}-{L}aplacian.
\newblock {\em Proc. Amer. Math. Soc.}, 137(5):1677--1687, 2009.

\bibitem{C}
Mabel Cuesta.
\newblock Eigenvalue problems for the {$p$}-{L}aplacian with indefinite
  weights.
\newblock {\em Electron. J. Differential Equations}, pages No. 33, 9 pp.
  (electronic), 2001.

\bibitem{CRQ}
Mabel Cuesta and Humberto Ramos~Quoirin.
\newblock A weighted eingenvalue problem for the p-laplacian plus a potential.
\newblock Technical report, {L.M.P.A.}, May 2008.

\bibitem{DPFB2}
Leandro~M. Del~Pezzo and Juli{\'a}n Fern{\'a}ndez~Bonder.
\newblock Some optimization problems for {$p$}-{L}aplacian type equations.
\newblock {\em Appl. Math. Optim.}, 59:365--381, 2009.

\bibitem{DPFB1}
Juli{\'a}n Fern{\'a}ndez~Bonder and Leandro~M. Del~Pezzo.
\newblock An optimization problem for the first eigenvalue of the
  {$p$}-{L}aplacian plus a potential.
\newblock {\em Commun. Pure Appl. Anal.}, 5(4):675--690, 2006.

\bibitem{FBGR}
Juli{\'a}n Fern{\'a}ndez~Bonder, Pablo Groisman, and Julio~D. Rossi.
\newblock Optimization of the first {S}teklov eigenvalue in domains with holes:
  a shape derivative approach.
\newblock {\em Ann. Mat. Pura Appl. (4)}, 186(2):341--358, 2007.

\bibitem{HENROT}
A.~Henrot and M.~Pierre.
\newblock {\em Optimization de forme: un analyse g\'eom\'etric. Mathematics and
  Applications}, volume~48.
\newblock Springer-Verlag, 2005.

\bibitem{LU}
Olga~A. Ladyzhenskaya and Nina~N. Ural{\cprime}tseva.
\newblock {\em Linear and quasilinear elliptic equations}.
\newblock Translated from the Russian by Scripta Technica, Inc. Translation
  editor: Leon Ehrenpreis. Academic Press, New York, 1968.

\bibitem{Oudet}
{\'E}douard Oudet.
\newblock Numerical minimization of eigenmodes of a membrane with respect to
  the domain.
\newblock {\em ESAIM Control Optim. Calc. Var.}, 10(3):315--330 (electronic),
  2004.

\bibitem{Pironneau}
Olivier Pironneau.
\newblock {\em Optimal shape design for elliptic systems}.
\newblock Springer Series in Computational Physics. Springer-Verlag, New York,
  1984.

\bibitem{Serrin}
James Serrin.
\newblock Local behavior of solutions of quasi-linear equations.
\newblock {\em Acta Math.}, 111:247--302, 1964.

\bibitem{T}
Peter Tolksdorf.
\newblock Regularity for a more general class of quasilinear elliptic
  equations.
\newblock {\em J. Differential Equations}, 51(1):126--150, 1984.

\end{thebibliography}
\def\cprime{$'$}

\end{document}